 \newtheorem{theorem}{Theorem}[section]
 \newtheorem{corollary}[theorem]{Corollary}
 \newtheorem{lemma}[theorem]{Lemma}
 \newtheorem{proposition}[theorem]{Proposition}
 \theoremstyle{definition}
 \theoremstyle{remark}
 \newtheorem{remark}[theorem]{Remark}
 \numberwithin{equation}{section}
\DeclareMathOperator {\Hom}{Hom}
\DeclareMathOperator {\ind}{ind}
\DeclareMathOperator {\Res}{Res}
\DeclareMathOperator {\Ext}{Ext}
\DeclareMathOperator {\Gl}{GL}
\newcommand{\Z}{\mathbb{Z}}
\newcommand{\C}{\mathbb{C}}
\newcommand{\A}{\mathcal{A}}
\newcommand{\B}{\mathcal{B}}
\newcommand{\Sg}{\mathfrak{S}}
\newcommand{\h}{\mathcal{H}}
\newcommand{\bdel}{\pmb{\delta}}
\newcommand{\bolam}{\pmb{\lambda}}
\newcommand{\bomu}{\pmb{\mu}}
\begin{document}

\renewcommand{\baselinestretch}{1.0}
\title[Comparing cohomology via exact split pairs in diagram algebras]
 {Comparing cohomology via exact split pairs in diagram algebras}

\author{Sulakhana Chowdhury}\address{Indian Institute of Science Education and Research Thiruvananthapuram, Thiruvananthapuram, \newline Email: sulakhana17@iisertvm.ac.in}
\author{Geetha Thangavelu}\address{Indian Institute of Science Education and Research Thiruvananthapuram, Thiruvananthapuram, \newline Email: tgeetha@iisertvm.ac.in }
\subjclass{18E30,20J05,20C30,16D40}

\keywords{Cellular algebras, cohomological comparison, exact split pair, $A$-Brauer algebras, cyclotomic Brauer algebras, walled Brauer algebras}

\begin{abstract}
In this article, we compare the cohomology between the categories of modules of the diagram algebras and the categories of modules of its input algebras. Our main result establishes a sufficient condition for exact split pairs between these two categories, analogous to a work by Diracca and Koenig \cite{DK}. To be precise, we prove the existence of the exact split pairs in $A$-Brauer algebras, cyclotomic Brauer algebras, and walled Brauer algebras with their respective input algebras. 

\end{abstract}

\maketitle
\section{Introduction}
Representation theory stands as one of the cornerstones among the disciplines in mathematics, with its primary objectives revolving around the classification and description of irreducible (also known as simple) representations. In the context of ring homomorphisms, directly comparing the cohomology of two rings is often a challenging task. However, in \cite{DK}, a method is introduced by Diracca and Koenig to overcome this difficulty. This approach combines the aspects of subring and quotient ring situations, allowing for cohomology comparison without the need for strong assumptions typically used in either of these scenarios. This technique can be used for the non-vanishing of cohomology in specific situations. The concept of exact split pair of functors provides an interesting tool to compare the cohomology between rings under particular conditions.

Diagram algebras play a significant role in the Schur-Weyl duality, a connection between representations of diagram algebras and algebraic groups. The classical Schur-Weyl duality, establishes a relationship between the polynomial representations of general linear groups $\Gl_n$ over the complex numbers $\C$ and representations of the symmetric groups $\Sg_r$, for details see \cite{JAM,Schur1927}.  Later on, Brauer examined an analogous problem for the symplectic and the orthogonal groups. The results on the Brauer algebras have been extended to walled Brauer algebras, Brauer algebras of type C,  cyclotomic Brauer algebras, and $A$-Brauer algebras. The walled Brauer algebras were introduced as centralizer algebras of $\Gl_n$ acting on the mixed tensor space in \cite{BCHL}, and are subalgebras of the Brauer algebras. The cyclotomic Brauer algebras were studied by Rui, Yu in \cite{RY1}, Bowman, Cox, and Visscher in \cite{BCV}, and $A$-Brauer algebras were studied in \cite{GG}. The primary focus of this article is to study an exact split pair for these algebras.

In this article, we use the framework of cellular algebras introduced by Koenig and Xi in \cite{KX,KX3}, where every cellular algebra can be written as an iterated inflations involving smaller cellular algebras. Using this framework Hartmann, Henke, Koenig, and Paget studied the cellularly stratified algebras and their split pairs in [\cite{HHKP}, Section 3]. $A$-Brauer algebras are known to be cyclic cellular but the cellularly stratified structure is yet to be known. Therefore, we used results from \cite{DK} to obtain an exact split pair for $A$-Brauer algebras, cyclotomic Brauer algebras and the walled Brauer algebras. 

The primary objective is to determine the sufficient condition for the existence of an exact split pair for the above mentioned class of diagram algebras and we use these to compare $\Hom,\Ext$ and study the associated numerical invariants, including the global dimensions.

This article is organized as follows. In Section \ref{Preliminaries}, we review definitions and properties of an exact split pair. In Section \ref{$A$-Brauer algebra}, we show an iterated inflation for $A$-Brauer algebras, which allows us to prove the existence of an exact split pair for the $A$-Brauer algebras. In Section \ref{Cyclotomic Brauer algebra} and Section \ref{Walled Brauer algebras}, we prove our main result for the cyclotomic Brauer algebras and the walled Brauer algebras.

\section{Preliminaries} \label{Preliminaries}

In this section, we recall some definitions from \cite{DK}, which will be used in this article.
 
Let $\A$ and $\B$ be two additive categories. A pair $(F,G)$ of two additive functors $F: \A \longrightarrow \B$ and $G: \B \longrightarrow \A$ is a \textit{split pair of functors} if the composition $F \circ G$ is an auto-equivalence of $\B$. If $\A$ and $\B$ are equipped with exact structures, and $F$ and $G$ are exact functors with respect to these exact structures, then the split pair is called an \textit{exact split pair of functors}. 

Let $A$ and $B$ be two rings. We call $B$ a \textit{split quotient} of $A$ if there is an embedding $\epsilon$ sending the unit of $B$ to the unit of $A$ and there exists a surjective homomorphism $\pi: A \twoheadrightarrow B$ such that $\pi \circ \epsilon=1_{B}$. Let
$\textbf{\text{mod}-}A$ and $\textbf{\text{mod}-}B$ be the categories of finitely generated right $A$-modules and right $B$-modules, respectively.
The homomorphism $\pi$ and $\epsilon$ induce two exact functors $F$ and $G$ between the exact categories $\textbf{\text{mod}-}A$ and $\textbf{\text{mod}-}B$ such that $F\circ G$ is the identity on $\textbf{\text{mod}-}B$, where $F= -\otimes_A A_B$ and $G=-\otimes_B B_A$.

Let $e$ be an idempotent of a ring $A$. Let $B$ be the centralizer subring $eAe$. Consider $eA$ and $Ae$ to be the $B$-$A$ and $A$-$B$ bimodules, respectively. Assume $Ae$ to be $eAe$-projective. Then the functors 
\begin{align*}
F: \textbf{\text{mod}-}A &\longrightarrow \textbf{\text{mod}-}B      &G:\textbf{\text{mod}-}B &\longrightarrow \textbf{\text{mod}-}A\\
N &\longmapsto N \otimes_A Ae  &M &\longmapsto M\otimes_B eA
\end{align*}
form an exact split pair of functors. We call the centralizer subrings $eAe$ as \textit{corner rings}.

Let $e$ be an idempotent in $A$, and $B$ a split quotient of $eAe$, viewed as a subring of $eAe$. Then we call $B$ a \textit{corner split quotient} of $A$ with respect to $e$,  if there is an $eAe$-$A$ bimodule $S$, which is projective as left $B$-module via the embedding of $B$ into $eAe$, and also satisfies the condition $Se\cong B$ as right $B$-modules. 


To prove the sufficient condition for the existence of an exact split pair, we will frequently
use the following result.

\begin{theorem}[\cite{DK}, Lemma 3.2] \label{general corner split quotient}
Let $A$ and $B$ be any arbitrary rings and $S$ a left $B$-module. If $B$ is a corner split quotient of $A$. Then the functors $F = - \otimes_A Ae$ and $G =- \otimes_B B \otimes_{eAe} S$ form an exact split pair.
\end{theorem}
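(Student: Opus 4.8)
The plan is to unwind the definition of \emph{exact split pair} into two independent tasks: first, that $F$ and $G$ are well-defined exact functors between $\textbf{\text{mod}-}A$ and $\textbf{\text{mod}-}B$; and second, that $F\circ G$ is naturally isomorphic to the identity on $\textbf{\text{mod}-}B$, which is in particular an auto-equivalence. Throughout I would fix notation for the split-quotient structure: an embedding $\epsilon\colon B\hookrightarrow eAe$ and a surjection $\pi\colon eAe\twoheadrightarrow B$ with $\pi\circ\epsilon=1_B$, and recall that $S$ is an $eAe$-$A$-bimodule which is projective as a left $B$-module via $\epsilon$ and satisfies $Se\cong B$ as right $B$-modules.

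First I would check that the functors make sense. For a right $A$-module $N$, the module $N\otimes_A Ae$ is a right $eAe$-module, and restriction of scalars along $\epsilon$ makes it a right $B$-module, so $F$ does land in $\textbf{\text{mod}-}B$. For $G$, the map $\pi$ makes $B$ a right $eAe$-module, so $B\otimes_{eAe}S$ is a $B$-$A$-bimodule (the $A$-structure coming from $S$), and hence $M\otimes_B B\otimes_{eAe}S$ is a right $A$-module for every right $B$-module $M$. Exactness of $F$ is the easy half: $Ae$ is a direct summand of ${}_AA$, hence flat, so $-\otimes_A Ae$ is exact, and post-composing with the exact restriction along $\epsilon$ keeps it exact. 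For $G$, I would first record the natural identification $M\otimes_B B\otimes_{eAe}S\cong M\otimes_{eAe}S$, where $M$ is viewed as a right $eAe$-module via $\pi$; exactness of $G$ then reduces to showing that $B\otimes_{eAe}S$ is flat — indeed projective — as a left $B$-module. Here I would exploit the splitting: $\pi\circ\epsilon=1_B$ exhibits $eAe$ as a direct sum $\epsilon(B)\oplus\ker\pi$ of $(B,B)$-bimodules, and I would feed this decomposition into $B\otimes_{eAe}(-)$ together with the hypothesis that $S$ is projective as a left $B$-module in order to display $B\otimes_{eAe}S$ as a direct summand of a free left $B$-module.

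For the auto-equivalence, I would compute, for a right $B$-module $M$, that $F(G(M))=\bigl(M\otimes_B B\otimes_{eAe}S\bigr)\otimes_A Ae\cong M\otimes_B B\otimes_{eAe}\bigl(S\otimes_A Ae\bigr)$, then use the canonical isomorphism $S\otimes_A Ae\cong Se$ of $(eAe,eAe)$-bimodules, invoke $Se\cong B$, and collapse $B\otimes_{eAe}B\cong B$ via multiplication (an isomorphism because $\pi$ is surjective), arriving at $M\otimes_B B\cong M$; a routine check that these isomorphisms are natural in $M$ then gives $F\circ G\cong\mathrm{id}_{\textbf{\text{mod}-}B}$. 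I expect the main obstacle to be the flatness claim in Step two: the hypothesis supplies projectivity of $S$ for the left $B$-module structure coming from $\epsilon$, whereas $B\otimes_{eAe}S$ is built from the $eAe$-module structure of $S$ and the ideal $\ker\pi$, so the two $B$-module structures have to be carefully reconciled through the idempotent $\epsilon(1_B)\in eAe$. A closely related secondary point is that in the last step one must upgrade $Se\cong B$ to an isomorphism of $(eAe,B)$-bimodules — with $eAe$ acting on $B$ through $\pi$ — so that the successive tensor identities are legitimate; once these two compatibility issues are handled, what remains is standard tensor--Hom and direct-summand bookkeeping.
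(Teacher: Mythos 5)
First, note that the paper does not actually prove this statement: it is quoted from \cite{DK} (Lemma 3.2) and used as a black box, so your attempt can only be measured against the standard argument. Your overall architecture is the expected one (exactness of $F$ from projectivity of ${}_AAe$, the identification $S\otimes_A Ae\cong Se$, and collapsing $B\otimes_{eAe}B\cong B$), but the two ``compatibility issues'' you defer at the end are not routine bookkeeping --- they are precisely where the proof lives, and with only the hypotheses as transcribed here (${}_BS$ projective via $\epsilon$, and $Se\cong B$ merely as right $B$-modules) they cannot be resolved.

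Concretely, write $K=\ker\pi$, so $B\otimes_{eAe}S\cong S/KS$ with $B$ acting through $\epsilon$. Your plan to deduce projectivity (or flatness) of this module from the bimodule decomposition $eAe=\epsilon(B)\oplus K$ fails: since $\epsilon(1_B)=e$ acts as the identity on $S$, one has $\epsilon(B)S=S$, so the ring decomposition induces no $B$-module decomposition of $S$, and projectivity of ${}_BS$ does not pass to the quotient $S/KS$. Similarly $B\otimes_{eAe}Se\cong Se/KSe$ need not be $B$, because nothing in the stated hypotheses controls the left action of $K$ on $S$. A counterexample to your two claims as stated: let $A=eAe=B\times B$ with $e=1$, $\epsilon$ the diagonal embedding, $\pi$ the first projection, and $S=B$ with right $A$-action through the first coordinate and left $A$-action through the second; then ${}_BS\cong B$ is projective and $Se\cong B$ as right $B$-modules, yet $KS=S$, so $B\otimes_{eAe}S=0$ and $G$ is the zero functor, so $F\circ G$ is not an equivalence. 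The missing ingredient, present in the precise definition in \cite{DK} and verified in every application in this paper, is a compatibility of the left $eAe$-structure of $S$ with the splitting: e.g.\ that the left $eAe$-action on $S$ factors through $\pi$ (equivalently $KS=0$), or that $Se\cong B$ holds as $(eAe,B)$-bimodules with $eAe$ acting on $B$ through $\pi$. With $KS=0$ one gets $B\otimes_{eAe}S\cong{}_BS$ (so exactness of $G$ follows from the projectivity hypothesis) and $B\otimes_{eAe}Se\cong B\otimes_{eAe}B\cong B$, and your computation of $F\circ G$ then goes through; this is exactly the structure built into $S_l=A\wr\Sg_{n-2l}\otimes_{e_lD_n(A)e_l}e_lD_n(A)$ in the proof of Theorem~\ref{corner split quotient for A Brauer algebra}, whose left $e_lD_n(A)e_l$-action is defined through the quotient map $\alpha$. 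So you should not present these two points as standard tensor bookkeeping: either import the exact hypotheses of \cite{DK} or add the verification that $\ker\pi$ annihilates $S$ (respectively the bimodule form of $Se\cong B$) to your argument.
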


\section{The $A$-Brauer algebra} \label{$A$-Brauer algebra}

Let $R$ be a commutative unital ring, with $\delta \in R$. Let $A$ be a unitary associative algebra over $R$, not necessarily commutative. Assume that $A$ possesses an $R$-linear algebra involution denoted by $*$, along with a $*$-invariant $R$-valued trace, denoted as $\mathrm{tr}$, where $\mathrm{tr}(1)= \delta$. An $A$-Brauer diagram is a Brauer diagram with $A$-labeled edges and has an orientation of edges. We denote the $A$-Brauer algebra by $D_n(A)$. Note that the algebra $D_n(A)$ is generated by $s_{i}$, $e_{i}$, and $h_j^{a}$ where $a\in A, 1 \leq i \leq n-1$, and $1\leq j \leq n$ (see Figure \ref{generators of the A-Brauer algebra}).

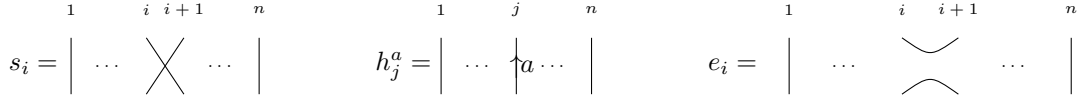
\begin{figure}[H]
\begin{tikzpicture}[x=0.5cm,y=0.75cm]
\draw[-] (0,0)-- (0,-1);
\draw[-] (2,0)-- (3,-1);
\draw[-] (3,0)-- (2,-1);
\draw[-] (5,0)-- (5,-1);
\node at (-1,-0.5) {$s_{i}=$};
\node at (0,0.5) {\tiny $1$};
\node at (1,-0.5) {\tiny $\cdots$};
\node at (2,0.5) {\tiny $i$};
\node at (3,0.5) {\tiny $i+1$};
\node at (4,-0.5) {\tiny $\cdots$};
\node at (5,0.5) {\tiny $n$};
\end{tikzpicture}
\hspace{1cm}
\begin{tikzpicture}[x=0.5cm,y=0.75cm]
\draw[-] (0,0)-- (0,-1);
\draw[-] (2,0)-- (2,-1);
\draw[-] (4,0)-- (4,-1);
\node at (-1,-0.5) {$h_{j}^{a}=$};
\node at (0,0.5) {\tiny $1$};
\node at (1,-0.5) {\tiny $\cdots$};
\node at (2,0.5) {\tiny $j$};
\node at (2,-0.5) { $\uparrow$};
\node at (2.3,-0.5) { $a$};
\node at (3,-0.5) {\tiny $\cdots$};
\node at (4,0.5) {\tiny $n$};
\end{tikzpicture}
\hspace{1cm}
\begin{tikzpicture}[x=0.75cm,y=0.75cm]
\draw[-] (0,0)-- (0,-1);
\draw[-] (2,0).. controls(2.5,-0.35)..(3,0);
\draw[-] (2,-1).. controls(2.5,-0.65)..(3,-1);
\draw[-] (5,0)-- (5,-1);
\node at (-1,-0.5) {$e_{i}=$};
\node at (0,0.5) {\tiny $1$};
\node at (1,-0.5) {\tiny $\cdots$};
\node at (2,0.5) {\tiny $i$};
\node at (3,0.5) {\tiny $i+1$};
\node at (4,-0.5) {\tiny $\cdots$};
\node at (5,0.5) {\tiny $n$};
\end{tikzpicture}
\caption{Generators of the $A$-Brauer algebra.}\label{generators of the A-Brauer algebra}
\end{figure}

Let $X$ and $Y$ be two $A$-Brauer diagrams. The product $X\cdot Y$ is defined by placing $X$ on top of $Y$ and concatenating the bottom row of $X$ with the top row of $Y$ to form a new diagram $Z$. The product rule is similar to that of Brauer diagrams. Fix an orientation of labels in the concatenated diagram and apply $*$ on the label to reverse the orientation if needed. Each edge in the product is labeled by the product of the labels of the component edges.  For each loop, multiply the diagram by the $R$-valued trace of its label. Then, $X\cdot Y= \alpha Z$, where the scalar $\alpha$ is the product of $R$-valued trace of all closed loops. For example, see [\cite{GG}, Figure 5.3]. For more details, see [\cite{GG}, Section 5].


Let $A \wr \Sg_n$ be the wreath product algebra, a subalgebra of $D_n(A)$ generated by $s_{i}$ and $h_j^{a}$ for $1 \leq i \leq n-1,a \in A$, and $1 \leq j \leq n$.  A cellular algebra is said to be \textit{cyclic cellular} if every cell module of $A$ is cyclic [\cite{GG}, Section 2.4.3]. Let $(\Gamma, \geq)$ be a finite poset. Let $A$ be a cyclic cellular algebra with cell datum $(A, *, \Gamma, \geq, \mathcal{T}, \mathcal{C})$, described as in [\cite{GG}, Lemma 2.5]. If the algebra $A$ is cyclic cellular, then $A\wr \Sg_n$ is cyclic cellular  with cell datum $(A\wr \Sg_n, *, \Lambda_n^{\Gamma}, \unrhd_{\Gamma}, \mathcal{M}, \mathcal{B} )$, see [\cite{GG}, Theorem 4.1].

When $\delta $ is invertible in $R$, the idempotent is given by Figure \ref{Idempotent of the A-Brauer algebra when delta is nonzero}. When $\delta =0$ and $n$ is odd, the idempotent is defined as in Figure \ref{Idempotent of the A-Brauer algebra when delta 0}. When $\delta = 0$ and $n$ is even, using the same idempotent as in Figure \ref{Idempotent of the A-Brauer algebra when delta is nonzero} results in the square of the two-sided ideal $J$ being zero, where $J$ is the two-sided ideal of $D_n(A)$ containing all $A$-Brauer diagrams with only horizontal edges. Hence, we shall not consider the case $\delta = 0$ and $n$ is even.

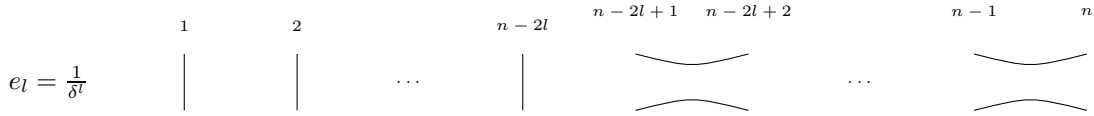
\begin{figure}[H]
\begin{tikzpicture}[x=1.5cm,y=0.75cm]
\draw[-] (1,0)-- (1,-1);
\draw[-] (2,0)-- (2,-1);
\draw[-] (4,0)-- (4,-1);
\draw[-] (5,0).. controls(5.5,-0.25)..(6,0);
\draw[-] (8,0).. controls(8.5,-0.25)..(9,0);
\draw[-] (5,-1).. controls(5.5,-0.75)..(6,-1);
\draw[-] (8,-1).. controls(8.5,-0.75)..(9,-1);

\node at (-0.2,-0.5) {$e_l=\frac{1}{\delta^l}$};
\node at (1,0.5) {\tiny $1$};
\node at (2,0.5) {\tiny $2$};
\node at (3,-0.5) {\tiny $\cdots $};
\node at (4,0.5) {\tiny $n-2l$};
\node at (5,0.75) {\tiny $n-2l+1$};
\node at (6,0.75) {\tiny $n-2l+2$};
\node at (7,-0.5) {\tiny $\cdots$};
\node at (8,0.75) {\tiny $n-1$};
\node at (9,0.75) {\tiny $n$};
\end{tikzpicture}
\caption{Idempotent of the $A$-Brauer algebra for an invertible $\delta$.}\label{Idempotent of the A-Brauer algebra when delta is nonzero}
\end{figure}

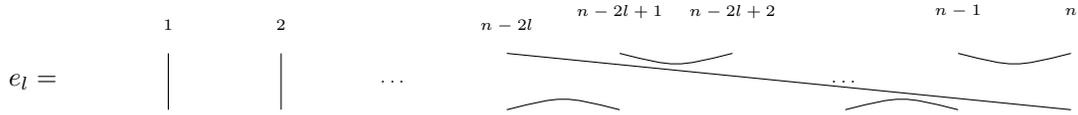
\begin{figure}[H]
\begin{tikzpicture}[x=1.5cm,y=0.75cm]
\draw[-] (1,0)-- (1,-1);
\draw[-] (2,0)-- (2,-1);
\draw[-] (4,0)-- (9,-1);
\draw[-] (5,0).. controls(5.5,-0.25)..(6,0);
\draw[-] (8,0).. controls(8.5,-0.25)..(9,0);
\draw[-] (4,-1).. controls(4.5,-0.75)..(5,-1);
\draw[-] (7,-1).. controls(7.5,-0.75)..(8,-1);

\node at (-0.2,-0.5) {$e_l=$};
\node at (1,0.5) {\tiny $1$};
\node at (2,0.5) {\tiny $2$};
\node at (3,-0.5) {\tiny $\cdots $};
\node at (4,0.5) {\tiny $n-2l$};
\node at (5,0.75) {\tiny $n-2l+1$};
\node at (6,0.75) {\tiny $n-2l+2$};
\node at (7,-0.5) {\tiny $\cdots$};
\node at (8,0.75) {\tiny $n-1$};
\node at (9,0.75) {\tiny $n$};
\end{tikzpicture}
\caption{Idempotent of the $A$-Brauer algebra when $\delta=0$, and $n$ is odd.}\label{Idempotent of the A-Brauer algebra when delta 0}
\end{figure}

The cellular structure of $A$-Brauer algebra is known from \cite{GG}, we will prove that the algebra $D_n(A)$ can be written as an iterated inflation of the wreath product algebra $A\wr \Sg_n$.  The ``iterated inflation" is an inductive construction of cellular algebra from a smaller cellular algebra, see [\cite{KX3}, Section 3] for more details. Let $0 \leq l \leq \lfloor \frac{n}{2} \rfloor$, where $\lfloor \frac{n}{2}\rfloor$ is  the integer part of $\frac{n}{2}$ and $J_l$ be the free $R$-module generated by all $A$-Brauer diagrams with at least $l$ horizontal edges. Throughout the paper, we assume that the diagram having $l$ horizontal edges means it has $l$ horizontal edges in each row.

\begin{lemma}\label{Jl two sided ideal of A-Brauer algebras}
 If $0 \leq l \leq \lfloor \frac{n}{2} \rfloor$, then $J_l$ is a two-sided ideal of $D_n(A)$. 
\end{lemma}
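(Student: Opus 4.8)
The plan is to reduce the statement to a purely combinatorial fact about concatenation of Brauer diagrams, namely that stacking diagrams can never decrease the number of horizontal edges in a row. Since $D_n(A)$ is spanned over $R$ by the $A$-Brauer diagrams and $J_l$ is the $R$-span of those $A$-Brauer diagrams having at least $l$ horizontal edges in each row, by $R$-bilinearity of the multiplication it is enough to check that $X \cdot Y \in J_l$ and $Y \cdot X \in J_l$ whenever $X$ is an arbitrary $A$-Brauer diagram and $Y$ is an $A$-Brauer diagram with at least $l$ horizontal edges. By the product rule, $X \cdot Y = \alpha Z$ with $\alpha \in R$ a product of traces of loop labels and $Z$ a single $A$-Brauer diagram; the $A$-labels on edges, the orientation corrections by $*$, and the loop scalars are irrelevant to how many horizontal edges $Z$ carries, so it suffices to control the underlying (unlabelled) Brauer diagram of $Z$.

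For that, write $p(D)$ for the number of through (propagating) strands of a Brauer diagram $D$ on $n+n$ vertices, so that $D$ has exactly $l$ horizontal edges in each row if and only if $p(D) = n - 2l$; thus having at least $l$ horizontal edges is equivalent to $p(D) \le n - 2l$. The claim is that $p(XY) \le \min\{p(X), p(Y)\}$. To see $p(XY) \le p(Y)$, form the concatenated graph $G$ obtained by gluing the bottom row of $X$ to the top row of $Y$ along $n$ middle vertices: each middle vertex has degree $2$ and each outer (top or bottom) vertex has degree $1$, so $G$ is a disjoint union of paths and cycles, and the through strands of $Z$ are exactly the paths of $G$ joining a top vertex to a bottom vertex. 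Given such a path $\gamma$ with bottom endpoint $b$, the unique $G$-edge at $b$ is an arc of $Y$; it cannot go to another bottom vertex, since that vertex has degree $1$ and $\gamma$ would then be a horizontal edge of $Z$ rather than a through strand, so this arc is a propagating arc of $Y$. Distinct through strands of $Z$ have distinct bottom endpoints and hence are assigned distinct propagating arcs of $Y$, giving $p(XY) \le p(Y)$; turning the picture upside down yields $p(XY) \le p(X)$.

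Combining these: if $Y$ has at least $l$ horizontal edges then $p(Y) \le n - 2l$, hence $p(Z) = p(XY) \le n - 2l$, so $Z$ has at least $l$ horizontal edges and $X \cdot Y = \alpha Z \in J_l$; running the same argument with the two rows exchanged gives $Y \cdot X \in J_l$. Extending over all $A$-Brauer diagrams by $R$-linearity yields $D_n(A)\, J_l \subseteq J_l$ and $J_l\, D_n(A) \subseteq J_l$, so $J_l$ is a two-sided ideal of $D_n(A)$. The only genuinely non-formal ingredient is the inequality $p(XY) \le \min\{p(X), p(Y)\}$, i.e.\ that the propagating number is weakly decreasing under multiplication of diagrams; I expect this to be the one step requiring care, and everything else is bookkeeping about edge labels, loop scalars, and the $R$-linear structure.
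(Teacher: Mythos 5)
Your proof is correct and takes essentially the same route as the paper: the paper's argument consists of the single assertion that a product of diagrams with $m_1$ and $m_2$ horizontal edges has at least $\max\{m_1,m_2\}$ horizontal edges, which is exactly your inequality $p(XY)\le\min\{p(X),p(Y)\}$ on propagating strands, restated. The only difference is that you actually justify this combinatorial fact via the path/cycle analysis of the concatenated graph, a detail the paper leaves implicit.
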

\begin{proof}
For any $d_1,d_2 \in D_n(A)$ with $m_1 $ and $m_2$ horizontal edges, the product $d_1 \cdot d_2$ have at least $\max\{m_1,m_2\}$ horizontal edges. Thus, $J_l$ is a two-sided ideal, where $0 \leq l \leq \lfloor \frac{n}{2} \rfloor$.
\end{proof}

Let $0 \leq l \leq \lfloor \frac{n}{2} \rfloor$. Consider the chain of two-sided ideals of $D_n(A)$ $$0 \subseteq J_{\lfloor \frac{n}{2} \rfloor} \subseteq \cdots \subseteq J_1 \subseteq J_0=D_n(A),$$ where each $J_l$ is generated by the idempotent $e_l$, which we defined already.
 An \textit{$(n,l)$ $A$-labeled partial diagram} is a one-row diagram on $n$ vertices with $l$ $A$-labeled horizontal edges, and $n-2l$ free vertices. Let $V(n,l,A)$ be the free $R$-module generated by all $(n,l)$ $A$-labeled partial diagrams. From now on, let us assume that $R$ is a commutative Noetherian domain.


Consider the $R$-algebra $V(n,l,A)\otimes_R V(n,l,A)\otimes_R A\wr \Sg_{n-2l}$ having a basis of the form $e \otimes f \otimes \pi_d$, where $e$ and $f$ are respectively the top and bottom configuration of an $A$-Brauer diagram $d$, and $\pi_d \in A\wr \Sg_{n-2l}$. The multiplication in $V(n,l,A)\otimes V(n,l,A)\otimes A\wr \Sg_{n-2l}$ is defined by $$(e \otimes f \otimes \pi_d)\cdot (e'\otimes f'\otimes \pi_{d^{'}}) = e \otimes f' \otimes \pi_d \phi(f,e') \pi_{d^{'}},$$ where $\phi: V(n,l,A) \otimes_R V(n,l,A) \longrightarrow A\wr \Sg_{n-2l}$ is a bilinear form. Let the $R$-algebra $J_l/J_{l+1}$ be denoted by $B$.

\begin{lemma}\label{each layer corresponds to iterated inflation}
For $0 \leq l \leq \lfloor \frac{n}{2} \rfloor$, $B$ is isomorphic to the inflation $V(n,l, A) \otimes V(n,l,A)\otimes A\wr \Sg_{n-2l}$ of the wreath product algebra along $V(n,l,A)$.
\end{lemma}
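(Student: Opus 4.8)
The plan is to exhibit an explicit $R$-linear isomorphism $\Psi \colon V(n,l,A)\otimes_R V(n,l,A)\otimes_R A\wr\Sg_{n-2l}\longrightarrow B=J_l/J_{l+1}$ and to check that it respects the multiplication rules on both sides. First I would describe how an $A$-Brauer diagram $d$ with exactly $l$ horizontal edges decomposes: its top row of $n$ vertices carries $l$ labeled horizontal edges together with $n-2l$ endpoints of through-strands, which is precisely an $(n,l)$ $A$-labeled partial diagram $e\in V(n,l,A)$; likewise the bottom row gives $f\in V(n,l,A)$; and the through-strands, read together with the orientations and $A$-labels on them (using $*$ to normalize orientations as in the product rule), determine a well-defined element $\pi_d\in A\wr\Sg_{n-2l}$ after we fix a reference numbering of the $n-2l$ free vertices in each configuration. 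So I would send the basis element $e\otimes f\otimes\pi_d$ to the class of the corresponding diagram $d$ in $J_l/J_{l+1}$, and extend $R$-linearly. Surjectivity is immediate since diagrams with $\geq l+1$ horizontal edges are killed, so the classes of diagrams with exactly $l$ horizontal edges span $B$; injectivity follows because those diagrams form an $R$-basis of $J_l/J_{l+1}$ and the triples $(e,f,\pi_d)$ are in bijection with them.

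The core of the proof is checking that $\Psi$ is an algebra map, i.e. that for diagrams $d,d'$ the product $d\cdot d'$ computed in $D_n(A)$, reduced modulo $J_{l+1}$, matches the inflation product $(e\otimes f\otimes\pi_d)\cdot(e'\otimes f'\otimes\pi_{d'}) = e\otimes f'\otimes \pi_d\,\phi(f,e')\,\pi_{d'}$. Here I would analyze the concatenation of $d$ on top of $d'$: the composite has at least $l$ horizontal edges, and it has exactly $l$ (i.e. survives in $B$) precisely when no two of the $l$ horizontal edges of $f$ and the $l$ horizontal edges of $e'$ ``cancel'' against each other through the middle, i.e. when the matching $\phi(f,e')$ contributes an honest permutation-with-$A$-labels in $A\wr\Sg_{n-2l}$ rather than forcing extra horizontal edges; otherwise both sides are $0$ in $B$ (on the left because the product lands in $J_{l+1}$, on the right because $\phi(f,e')$ is then defined to be $0$). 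In the surviving case, the top configuration of $d\cdot d'$ is $e$, the bottom is $f'$, and the through-part is the composite of the through-parts, with the interior matching of $f$ against $e'$ contributing exactly the wreath-product element $\phi(f,e')$ — the closed loops formed in the middle get replaced by their $R$-trace scalars, which is exactly how multiplication in $A\wr\Sg_{n-2l}$ absorbs them. This is the definition of $\phi$ that makes the statement work, and verifying it is a diagram-chasing argument parallel to the Brauer-algebra case in [\cite{KX3}] but bookkeeping the $A$-labels and orientations via $*$ and $\mathrm{tr}$.

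After the multiplicativity check, I would record that $\phi$ is $A\wr\Sg_{n-2l}$-bilinear in the appropriate sense and that under $\Psi$ the left and right $D_n(A)$-actions are compatible, so that $\Psi$ is not merely an $R$-algebra isomorphism but realizes $B$ as the inflation of $A\wr\Sg_{n-2l}$ along $V(n,l,A)$ in the sense of [\cite{KX3}, Section 3]; together with Lemma \ref{Jl two sided ideal of A-Brauer algebras} this gives the iterated inflation structure layer by layer along the chain $0\subseteq J_{\lfloor n/2\rfloor}\subseteq\cdots\subseteq J_0=D_n(A)$.

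The main obstacle I anticipate is the careful treatment of orientations and $A$-labels when composing through-strands and horizontal edges: one must fix conventions for reading off $\pi_d$ from the labeled oriented strands, show this is independent of the choices (so $\Psi$ is well-defined), and then confirm that the product rule's use of $*$ (to reverse an orientation) and of $\mathrm{tr}$ (on closed loops, with $\mathrm{tr}(1)=\delta$) exactly reproduces multiplication in $A\wr\Sg_{n-2l}$ and the bilinear form $\phi$. The purely combinatorial skeleton — that diagram concatenation never decreases the number of horizontal edges and drops to $J_{l+1}$ exactly when the middle matching is ``degenerate'' — is routine and already implicit in Lemma \ref{Jl two sided ideal of A-Brauer algebras}; the labelled refinement is where the real work lies.
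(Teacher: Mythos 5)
Your proposal is correct and follows essentially the same route as the paper: you define the same basis-level correspondence $d\leftrightarrow e\otimes f\otimes\pi_d$, construct the bilinear form $\phi(f,e')$ from the middle concatenation (trace scalars for closed loops, the induced $A$-labelled permutation, and $0$ in the degenerate case where extra horizontal edges are created so the product falls into $J_{l+1}$), and verify multiplicativity exactly as in the paper's case (i)/(ii) analysis. The only difference is the inessential direction of the map $\Psi$, and your closing remarks on orientation/label bookkeeping match the level of detail the paper itself supplies.
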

\begin{proof}
The $R$-algebra $B$ has a basis consisting of all diagrams with exactly $l$ horizontal edges in the top row, another $l$ horizontal edges in bottom row and $n-2l$ vertical edges. Define an $R$-linear map
\begin{align*}
\Psi: B & \longrightarrow V(n,l,A) \otimes V(n,l,A) \otimes A\wr \Sg_{n-2l}\\
d &\mapsto e \otimes f\otimes \pi_d
\end{align*}
as follows: Fix $d\in B$ and denote by $e$ (resp. $f$) the $(n,l)$ $A$-labeled partial diagram in top (resp. bottom) configuration of $d$. Renumber the free vertices of $e$ and $f$ as $1,2,\cdots, n-2l$ from left to right. Then the vertical edges of $d$ define a permutation $\pi_d\in A \wr\Sg_{n-2l}$ sending the number assigned to top vertex of an edge to the number in its bottom vertex. We need to define a bilinear form $\phi$ to give a multiplicative structure on $V(n,l,A) \otimes V(n,l,A) \otimes A\wr \Sg_{n-2l}$. This construction will use the map $\Psi$, thereby confirming that $\Psi$ is indeed an $R$-algebra isomorphism. Clearly, $\Psi$ is $R$-module isomorphism and we need to check that $\Psi$ preserves the multiplication of two diagrams in $V(n,l,A) \otimes V(n,l,A) \otimes A\wr \Sg_{n-2l}$. Let $d,d' \in B$ with $\Psi(d)=X$, and $\Psi(d')=Y$, where $X$ and $Y$ are of the form $e\otimes f\otimes \pi_d$ and $e'\otimes f' \otimes \pi_{d^{'}}$, respectively. Then the product $X\cdot Y$ has to be of the form $e \otimes f' \otimes \pi_d \phi(f,e')\pi_{d^{'}}$. The product $X\cdot Y$ is obtained by identifying the vertices in $f$ with those in $e$. Now, we have two cases:

\begin{itemize}
\item[(i)] In the concatenated diagram if each horizontal edge of $f$ and $e'$ shares at least one common vertex, then we define the bilinear form $\phi: V(n,l,A) \times V(n,l,A) \rightarrow A\wr\Sg_{n-2l}$ by $\phi(f , e')=\prod_i \mathrm{tr}(a_i)^{l_i}\cdot\sigma$, where $l_i$ is the number of connected components that lies entirely in the middle row of the concatenated diagram and $\sigma$ is the permutation defined by the vertical edges of the product $\Psi^{-1}( f \otimes f \otimes \mathrm{id})\cdot  \Psi^{-1} (e' \otimes e'\otimes \mathrm{id})$. Then the product $X\cdot Y$ coincides with $\Psi(\Psi^{-1}(X) \Psi^{-1}(Y))$ .
\item[(ii)]  If at least one horizontal edges in $f$ which does not share a vertex with any horizontal edges in $e'$, then we define $\phi(f',e)=0$. The number of horizontal edges of $\Psi^{-1}(X)\cdot \Psi^{-1}(Y)$ in $D_n(A)$ is strictly greater than $l$ which implies that the product is zero inside $B$. 
\end{itemize}
Hence, $\Psi$ is an $R$-algebra isomorphism.
\end{proof}

\begin{remark} 
The $R$–rank of $V(n,l,A)$ is $(\dim A)^n 
\big(\frac{n!}{l!(n-2l)! 2^l}\big)$. 
\end{remark}

\begin{lemma}
Let $0\leq k,l\leq \lfloor \frac{n}{2}\rfloor$. Let $d \in J_k/J_{k+1}$ and $d' \in J_l/J_{l+1}$ be two diagrams in $D_n(A)$, which maps under $\Psi$ to $e \otimes f \otimes \pi_{d}$ and $e'\otimes f' \otimes \pi_{d'}$, respectively. Then the product $d\cdot d'$ is either an element of $J_{s}/J_{s+1}$ or lies in $J_{s+1}$, where $s=\max\{k,l\}$. Moreover, if $\max\{k,l\}=l$, then the product is of the form $c( e''\otimes e'\otimes \sigma \pi_{d'})$, where $c$ is a scalar, $e'' \in V(n,l,A)$, and $\sigma \in A\wr \Sg_{n-2l}$. 
\end{lemma}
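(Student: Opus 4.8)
The plan is to argue entirely diagrammatically: first I would track the number of horizontal edges through the concatenation $d\cdot d'$, and then read the three tensor components of $\Psi$ off the shape of the resulting diagram.

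For the first assertion, the point is that $d$ and $d'$ are single basis diagrams with exactly $k$ and exactly $l$ horizontal edges, so the concatenation rule recalled before Lemma~\ref{Jl two sided ideal of A-Brauer algebras} gives $d\cdot d'=\alpha Z$ with $Z$ a single $A$-Brauer diagram and $\alpha\in R$ the product of the $R$-valued traces of the closed loops. The counting in the proof of Lemma~\ref{Jl two sided ideal of A-Brauer algebras} shows that $Z$ has at least $s=\max\{k,l\}$ horizontal edges in each row, so $d\cdot d'\in J_s$. If $\alpha=0$, or if $Z$ has strictly more than $s$ horizontal edges, then $d\cdot d'\in J_{s+1}$; otherwise $\alpha\ne0$ and $Z$ has exactly $s$ horizontal edges in each row, so the image $\overline Z$ of $Z$ in $J_s/J_{s+1}$ is nonzero and $d\cdot d'$ represents $\alpha\overline Z$ there. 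This is the stated dichotomy.

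For the ``moreover'' part, assume $s=l$, i.e.\ $l\ge k$, and that we are in the nontrivial case $d\cdot d'=\alpha Z\notin J_{l+1}$, so $\alpha\ne0$ and $Z$ has exactly $l$ horizontal edges in each row. When $k=l$ the assertion is immediate from the multiplication formula in Lemma~\ref{each layer corresponds to iterated inflation}, with $c=\prod_i\mathrm{tr}(a_i)^{l_i}$ and $e''$ the top configuration of $d$; the content lies in the case $k<l$. There the key observation is that the bottom row of $Z$ is by construction \emph{literally} the bottom row of $d'$ --- stacking $d$ on top of $d'$ changes nothing below the middle row --- so all $l$ horizontal edges of the bottom configuration of $d'$ survive in $Z$, and since $Z$ has only $l$ horizontal edges there, no new horizontal edge appears in the bottom: the bottom $(n,l)$-configuration of $Z$ is the bottom configuration of $d'$ (for the chosen orientation of labels), i.e.\ the factor $f'$ of $\Psi(d')=e'\otimes f'\otimes\pi_{d'}$. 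Thus $\Psi(\overline Z)=e''\otimes f'\otimes\rho$ with $e''\in V(n,l,A)$ the \emph{new} top configuration created by the gluing and $\rho\in A\wr\Sg_{n-2l}$, and it remains to factor $\rho$. For this I would label the $n$ vertices of the middle row and form the graph on them whose edges are the $k$ bottom horizontal edges of $d$ together with the $l$ top horizontal edges of $d'$; its connected components are cycles --- which account for the loops, hence for the scalar $c=\alpha$ --- and simple paths. Each through-strand of $Z$ leaves the composite at a free vertex of the bottom row of $d'$, and such a vertex is met, on the $d'$-side, only by a through-strand of $d'$; so the last leg inside $d'$ of any through-strand of $Z$ is a through-strand of $d'$. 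After renumbering the free vertices, this displays a bijection $\sigma$ --- assembled from the through-strands of $d$, the new middle paths, and the identifications at the interface --- carrying the free top vertices of $Z$ to the free top vertices of $d'$ and carrying the accumulated $A$-labels, such that the through-strand permutation of $Z$ equals $\sigma$ followed by $\pi_{d'}$; collecting the scalar yields $d\cdot d'=c\,(e''\otimes f'\otimes\sigma\pi_{d'})$.

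The main obstacle is this last step: one must verify that, exactly when the product stays in layer $l$ (so that no superfluous horizontal edge is created), the gluing data really assemble into a single well-defined element $\sigma$ of the \emph{smaller} group $A\wr\Sg_{n-2l}$. Concretely, the $l-k$ new top horizontal edges absorb exactly $2(l-k)$ of the $n-2k$ free top vertices of $d$, so that $\pi_d\in A\wr\Sg_{n-2k}$ collapses consistently to a permutation of the remaining $n-2l$, and the $A$-labels must be tracked carefully --- including the occasional application of the involution $*$ needed to make the orientations in the concatenated diagram coherent. The first assertion and the identification of the bottom configuration are routine once the concatenation rule is unwound; the genuine work is this label- and vertex-bookkeeping.
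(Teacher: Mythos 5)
Your argument is correct and is essentially the expansion the paper intends: the paper's own proof is a one-line reference back to the concatenation analysis in Lemma~\ref{each layer corresponds to iterated inflation}, which is precisely what you carry out in detail (edge counting for the layer claim, then identifying the surviving bottom configuration, the loop scalar, and the factorization of the through-strand permutation as $\sigma\pi_{d'}$). Note only that your conclusion places $f'$ (the bottom configuration of $d'$) in the middle tensor slot, which is indeed what the concatenation gives since the product keeps the bottom row of $d'$; the $e'$ printed in the lemma's statement appears to be a typo for $f'$.
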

\begin{proof}
The proof is similar to that of the Lemma \ref{each layer corresponds to iterated inflation}. 
\end{proof}
In the following lemma, an $R$-linear anti-automorphism $f$ of an $R$-algebra with $f^2=\mathrm id$ will be called an \textit{involution}.
\begin{lemma}
If $0\leq l\leq \lfloor \frac{n}{2}\rfloor$, then the involution $*:A\wr \Sg_{n-2l} \rightarrow A\wr \Sg_{n-2l}$ extends to an involution $i$ on $D_n(A)$ given by $i( e\otimes f\otimes \pi_d)= (f \otimes e \otimes \pi_{d}^{*})$.
\end{lemma}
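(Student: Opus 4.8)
The plan is to verify the three defining properties of an involution for the map $i$ directly on the iterated-inflation basis $\{e \otimes f \otimes \pi_d\}$ of $D_n(A)$ provided by Lemma \ref{each layer corresponds to iterated inflation}, and then extend $R$-linearly. First I would check that $i$ is well-defined and $R$-linear: since $*$ on $A \wr \Sg_{n-2l}$ is $R$-linear and the assignment $e \otimes f \otimes \pi_d \mapsto f \otimes e \otimes \pi_d^*$ merely swaps tensor factors and applies a known linear map in the third slot, well-definedness is immediate from the fact that $\{e \otimes f \otimes \pi_d\}$ is an $R$-basis. Next I would check $i^2 = \mathrm{id}$: applying $i$ twice sends $e \otimes f \otimes \pi_d$ to $e \otimes f \otimes (\pi_d^*)^* = e \otimes f \otimes \pi_d$, using that $*$ is an involution on the wreath product algebra. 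This part is routine.

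The substantive step is showing that $i$ reverses products, i.e. $i(XY) = i(Y)\, i(X)$ for basis elements $X = e \otimes f \otimes \pi_d$ and $Y = e' \otimes f' \otimes \pi_{d'}$. Here I would use the multiplication rule
$$
(e \otimes f \otimes \pi_d)(e' \otimes f' \otimes \pi_{d'}) = e \otimes f' \otimes \pi_d\, \phi(f,e')\, \pi_{d'}
$$
from Lemma \ref{each layer corresponds to iterated inflation}, so that $i(XY) = f' \otimes e \otimes \big(\pi_d\, \phi(f,e')\, \pi_{d'}\big)^*$. On the other hand $i(Y)\, i(X) = (f' \otimes e' \otimes \pi_{d'}^*)(f \otimes e \otimes \pi_d^*) = f' \otimes e \otimes \pi_{d'}^*\, \phi(e',f)\, \pi_d^*$. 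Since $*$ is an anti-automorphism of $A \wr \Sg_{n-2l}$, we have $\big(\pi_d\, \phi(f,e')\, \pi_{d'}\big)^* = \pi_{d'}^*\, \phi(f,e')^*\, \pi_d^*$, so the identity reduces to the single claim
$$
\phi(f,e')^* = \phi(e',f).
$$
To prove this I would inspect the definition of $\phi$: on the "generic" case, $\phi(f,e') = \prod_i \mathrm{tr}(a_i)^{l_i} \cdot \sigma$, where $\sigma$ is the permutation (with labels) read off from the vertical edges of $\Psi^{-1}(f \otimes f \otimes \mathrm{id})\cdot \Psi^{-1}(e' \otimes e' \otimes \mathrm{id})$. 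Reflecting this concatenated diagram top-to-bottom exchanges the roles of $f$ and $e'$ and reverses every vertical edge, which is exactly the effect of $*$ on $A \wr \Sg_{n-2l}$; the closed-loop scalars $\prod_i \mathrm{tr}(a_i)^{l_i}$ are unchanged because $\mathrm{tr}$ is $*$-invariant, so $\mathrm{tr}(a_i^*) = \mathrm{tr}(a_i)$. In the degenerate case both $\phi(f,e')$ and $\phi(e',f)$ are $0$, and $0^* = 0$, so the identity still holds. I would also remark that the product lands in the appropriate layer $J_s/J_{s+1}$ by the preceding lemma, so the computation is consistent with the filtration.

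The main obstacle I anticipate is bookkeeping the edge labels and orientations when verifying $\phi(f,e')^* = \phi(e',f)$: one must track how the involution $*$ acts on a label when an oriented $A$-labeled edge gets reversed under the reflection, and confirm that this matches the label $*$-twisting built into the product rule for $A$-Brauer diagrams (where $*$ is applied to reverse orientation as needed) and into the definition of $*$ on $A \wr \Sg_{n-2l}$ from \cite{GG}. This is conceptually clear — reflection of a diagram simultaneously swaps top/bottom and reverses arrows — but it requires being careful that the two conventions agree up to the $*$-twist, which is where a genuine check is needed rather than a formal manipulation.
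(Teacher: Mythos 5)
Your proposal is correct and amounts to exactly the verification the paper leaves as ``straightforward'': the only substantive point is the symmetry $\phi(f,e')^{*}=\phi(e',f)$ of the inflation bilinear form, which you rightly reduce to flipping the concatenated diagram and using the $*$-invariance of $\mathrm{tr}$ on loop labels. The one thing worth adding is a sentence noting that anti-multiplicativity also holds for products of diagrams from different layers (where the product is a scalar times a single diagram, per the preceding lemma), but the same flip-and-trace argument covers that case, so this is not a genuine gap.
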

\begin{proof}
Straightforward. 
\end{proof}
As an immediate consequence, we have the following:
\begin{theorem}\label{iterated inflation of A-brauer algebra}
The algebra $D_n(A)$ is an iterated inflation of  wreath product algebras $A\wr \Sg_{n-2l}$ with the following decomposition 
\begin{equation*}
D_n(A) \cong \bigoplus_{l=0}^{\lfloor \frac{n}{2} \rfloor} V(n,l,A) \otimes V(n,l,A) \otimes A\wr \Sg_{n-2l}.
\end{equation*}
\end{theorem}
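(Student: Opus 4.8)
The plan is to assemble Theorem~\ref{iterated inflation of A-brauer algebra} as a direct corollary of the structural lemmas that precede it, in the spirit of the iterated-inflation machinery of \cite{KX3}. First I would observe that the chain of two-sided ideals
\[
0 \subseteq J_{\lfloor \frac{n}{2} \rfloor} \subseteq \cdots \subseteq J_1 \subseteq J_0 = D_n(A)
\]
from Lemma~\ref{Jl two sided ideal of A-Brauer algebras} gives, as $R$-modules, a decomposition $D_n(A) \cong \bigoplus_{l=0}^{\lfloor n/2 \rfloor} J_l/J_{l+1}$, since $R$ is a commutative Noetherian domain and each subquotient is free on the set of $A$-Brauer diagrams with exactly $l$ horizontal edges in each row. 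Then Lemma~\ref{each layer corresponds to iterated inflation} identifies each layer $B = J_l/J_{l+1}$ with the inflation $V(n,l,A) \otimes_R V(n,l,A) \otimes_R A \wr \Sg_{n-2l}$ of the wreath product algebra along the lattice $V(n,l,A)$, with multiplication twisted by the bilinear form $\phi$. Stacking these isomorphisms over all $l$ yields the claimed $R$-module decomposition.

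Second, I would explain why this decomposition is precisely an \emph{iterated inflation} and not merely a direct sum of modules. This is where the three preceding lemmas do the real work: Lemma~\ref{each layer corresponds to iterated inflation} supplies, for each $l$, the inflation datum (the free module $V(n,l,A)$, the algebra $A \wr \Sg_{n-2l}$, and the bilinear form $\phi$) realizing the layer; the third lemma (on $d \cdot d'$ with $d \in J_k/J_{k+1}$, $d' \in J_l/J_{l+1}$) verifies that multiplying elements from different layers lands in the layer indexed by $s = \max\{k,l\}$ or deeper, and records the precise form $c(e'' \otimes e' \otimes \sigma\pi_{d'})$ of the top-layer contribution --- this is exactly the compatibility condition that makes the filtration an inflation tower in the sense of \cite[Section 3]{KX3}; and the final lemma produces the involution $i$ on $D_n(A)$ restricting to $*$ on each $A \wr \Sg_{n-2l}$ and swapping the two tensor factors $V(n,l,A)$, which is the last ingredient the iterated-inflation definition requires. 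Since $A$ is assumed cyclic cellular and hence $A \wr \Sg_{n-2l}$ is cyclic cellular by \cite[Theorem 4.1]{GG}, each inflated layer is cellular, and the iterated inflation of cellular algebras is cellular.

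The bulk of the argument is therefore bookkeeping: I would state that the decomposition follows by combining the lemmas, then check that the inflation data $(V(n,l,A), A\wr\Sg_{n-2l}, \phi, i)$ assembled layer by layer satisfy the axioms of \cite[Section 3]{KX3} --- closure of the ideal filtration under multiplication, the factorization property of products across layers, and compatibility of the involution with the layer structure. The main obstacle I anticipate is purely notational rather than mathematical: one must be careful that the bilinear forms $\phi$ on the different layers are mutually compatible under the ideal inclusions (so that passing from $J_l/J_{l+1}$ to $J_{l-1}/J_l$ via the quotient maps is consistent with the inflation structure), and that the involution $i$ defined layerwise actually glues to a single anti-automorphism of $D_n(A)$ of order two; both points reduce to the explicit diagram-concatenation description already used in the proof of Lemma~\ref{each layer corresponds to iterated inflation}, so no new ideas are needed. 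Once these compatibilities are recorded, the theorem is immediate.
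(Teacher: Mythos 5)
Your proposal is correct and follows essentially the same route as the paper: the paper states Theorem~\ref{iterated inflation of A-brauer algebra} as an immediate consequence of Lemma~\ref{Jl two sided ideal of A-Brauer algebras}, Lemma~\ref{each layer corresponds to iterated inflation}, the cross-layer multiplication lemma, and the involution lemma, which is exactly the assembly you describe. The only superfluous remark is the appeal to cyclic cellularity of $A$, which is not needed for the iterated-inflation decomposition itself but only for the subsequent corollary on cellularity of $D_n(A)$.
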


As a consequence of this theorem and [\cite{KX3}, Proposition 3.5], we recover the following result from [\cite{GG}, Theorem 6.1].

\begin{corollary}
Let $A$ be a cyclic cellular algebra. Then, for $n \geq 1$, $D_n(A)$ is a cyclic cellular algebra.
\end{corollary}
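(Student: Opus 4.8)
The plan is to read this off directly from the iterated‑inflation decomposition of Theorem~\ref{iterated inflation of A-brauer algebra}, together with the cyclic cellularity of wreath products and [\cite{KX3}, Proposition~3.5]. First I would point out that everything needed for an iterated inflation has already been assembled in the preceding lemmas: Lemma~\ref{each layer corresponds to iterated inflation} realizes each layer $J_l/J_{l+1}$ as the inflation $V(n,l,A)\otimes V(n,l,A)\otimes A\wr\Sg_{n-2l}$ of the wreath product algebra along the \emph{free} $R$‑module $V(n,l,A)$, via the explicit bilinear form $\phi$; the subsequent lemma records that products behave compatibly across layers (a product of diagrams from layers $k$ and $l$ stays in layer $\max\{k,l\}$ or drops to $J_{\max\{k,l\}+1}$, with the stated triangular form); and the last lemma produces an involution $i$ on $D_n(A)$ restricting to $\ast$ on each $A\wr\Sg_{n-2l}$ and swapping the two $V$‑factors. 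Thus $D_n(A)$ satisfies the structural hypotheses under which [\cite{KX3}, Proposition~3.5] applies. Since $A$ is cyclic cellular, [\cite{GG}, Theorem~4.1] shows each input algebra $A\wr\Sg_{n-2l}$ is cyclic cellular with cell datum $(A\wr\Sg_{n-2l},\ast,\Lambda_{n-2l}^{\Gamma},\unrhd_\Gamma,\mathcal{M},\mathcal{B})$, and feeding this into [\cite{KX3}, Proposition~3.5] gives the desired conclusion.

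Spelling out the cyclicity a little more: the cell modules of the iterated inflation are the $R$‑modules $V(n,l,A)\otimes_R\Delta$, where $0\le l\le\lfloor\tfrac n2\rfloor$ and $\Delta$ ranges over the cell modules of $A\wr\Sg_{n-2l}$; by the cyclic cellularity of the wreath product we may write $\Delta=(A\wr\Sg_{n-2l})\,\bar v$ for a single element $\bar v$. To see that $V(n,l,A)\otimes\Delta$ is cyclic over $D_n(A)$, fix one $(n,l)$ $A$‑labeled partial diagram $u_0$ and consider $u_0\otimes\bar v$. For an arbitrary $(n,l)$ $A$‑labeled partial diagram $u$, choose an $A$‑Brauer diagram $d$ with exactly $l$ horizontal edges, top configuration $u$, bottom configuration $u_0$, and identity vertical part; acting by (the class of) $d$ through the module structure of Lemma~\ref{each layer corresponds to iterated inflation} sends $u_0\otimes\bar v$ to $u\otimes\big(\phi(u_0,u_0)\bar v\big)$, and for a suitable choice of $u_0$ and of the $A$‑labels on $d$ this is a nonzero scalar multiple of $u\otimes\bar v$; acting afterwards by $A\wr\Sg_{n-2l}$ sweeps $\bar v$ across all of $\Delta$. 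Hence $u_0\otimes\bar v$ generates $V(n,l,A)\otimes\Delta$, and $D_n(A)$ is cyclic cellular.

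The step I expect to require the real care is matching the hypotheses of [\cite{KX3}, Proposition~3.5] to the data produced above, and — if one argues the cyclic refinement by hand rather than quoting it from the Proposition — controlling the bilinear form $\phi$ in the straightening above. Concretely, one must ensure the concatenation computing $\phi(u_0,u_0)$ lands in case~(i) of Lemma~\ref{each layer corresponds to iterated inflation} and contributes an \emph{invertible} scalar, rather than in case~(ii) (scalar $0$) or producing a non‑invertible product of trace values; this forces the base partial diagram $u_0$ and the connecting labels to be chosen so that the middle row of the concatenation has no defect and carries only labels of invertible trace, which is exactly where the $\delta$‑dependent normalization of the idempotents $e_l$ (the different shapes of Figures~\ref{Idempotent of the A-Brauer algebra when delta is nonzero} and \ref{Idempotent of the A-Brauer algebra when delta 0} according to whether $\delta$ is invertible or $\delta=0$ with $n$ odd) and the standing assumption that $R$ is a Noetherian domain enter. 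Everything else in the argument is a direct invocation of Theorem~\ref{iterated inflation of A-brauer algebra} and [\cite{KX3}, Proposition~3.5].
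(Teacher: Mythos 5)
Your proposal is correct and follows essentially the same route as the paper: the paper's proof is exactly your first paragraph, namely that cyclic cellularity of $A$ gives cyclic cellularity of each $A\wr \Sg_{n-2l}$ by [\cite{GG}, Theorem 4.1], and then Theorem \ref{iterated inflation of A-brauer algebra} together with [\cite{KX3}, Proposition 3.5] yields the conclusion for $D_n(A)$. Your additional by-hand verification of cyclicity (with its correctly flagged caveat about the invertibility of $\phi(u_0,u_0)$ when $\delta$ is not invertible) is supplementary detail the paper does not include, since it simply quotes [\cite{KX3}, Proposition 3.5].
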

\begin{proof}
The assumption on $A$ ensures that the wreath product algebra $A\wr\Sg_n$ is cyclic cellular. Therefore, the $A$-Brauer algebra $D_n(A)$ is cyclic cellular by [\cite{KX3}, Proposition 3.5].
\end{proof}

%

\subsection{Exact split pair ($D_n(A),A\wr \Sg_n$)}
The main objective of this article is to establish the sufficient condition for the existence of an exact split pair for the case of $A$-Brauer algebras using the method from \cite{DK}. Constructing an exact split pair involves identifying a corner split quotient, as outlined in the Theorem \ref{general corner split quotient}.
\begin{proposition} \label{split quotient for A-Brauer algebra}
The algebra $A\wr \Sg_n$ is a split quotient of $D_n(A)$. 
\end{proposition}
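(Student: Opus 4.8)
The plan is to exhibit explicitly both the embedding $\epsilon : A\wr\Sg_n \hookrightarrow D_n(A)$ and the surjection $\pi : D_n(A)\twoheadrightarrow A\wr\Sg_n$ satisfying $\pi\circ\epsilon = 1_{A\wr\Sg_n}$, and to check the required compatibilities. The embedding is already available: by definition $A\wr\Sg_n$ is the subalgebra of $D_n(A)$ generated by the $s_i$ and the $h_j^a$, so we take $\epsilon$ to be this inclusion, and it clearly sends the unit to the unit. The main work is to produce $\pi$.

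For $\pi$ I would use the iterated inflation decomposition from Theorem \ref{iterated inflation of A-brauer algebra},
\begin{equation*}
D_n(A) \cong \bigoplus_{l=0}^{\lfloor n/2 \rfloor} V(n,l,A)\otimes V(n,l,A)\otimes A\wr\Sg_{n-2l},
\end{equation*}
and observe that the $l=0$ summand is exactly $V(n,0,A)\otimes V(n,0,A)\otimes A\wr\Sg_n \cong A\wr\Sg_n$, since an $(n,0)$ $A$-labeled partial diagram has no horizontal edges and there is a unique such diagram (the bilinear form $\phi$ on the rank-one modules $V(n,0,A)$ is just multiplication). By Lemma \ref{Jl two sided ideal of A-Brauer algebras}, $J_1$, the span of all $A$-Brauer diagrams with at least one horizontal edge, is a two-sided ideal of $D_n(A)$; by the analysis accompanying Lemma \ref{each layer corresponds to iterated inflation}, the quotient $D_n(A)/J_1$ is spanned by the images of the diagrams with no horizontal edges, i.e. the oriented $A$-labeled permutation diagrams, which is precisely $A\wr\Sg_n$. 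Define $\pi : D_n(A)\to D_n(A)/J_1 \cong A\wr\Sg_n$ to be the quotient map followed by this identification; it is a surjective $R$-algebra homomorphism.

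It then remains to verify $\pi\circ\epsilon = 1_{A\wr\Sg_n}$. This is where the only real check lies: on a diagram $\pi_d\in A\wr\Sg_n$ (a permutation diagram with $A$-labels, having no horizontal edges), $\epsilon(\pi_d)$ is that same diagram viewed inside $D_n(A)$, and $\pi$ sends it to its class in $D_n(A)/J_1$, which under the identification with the $l=0$ layer is $\pi_d$ itself; since the generators $s_i, h_j^a$ lie in the $l=0$ layer and are sent to themselves, and $\pi,\epsilon$ are algebra maps, $\pi\circ\epsilon$ fixes all of $A\wr\Sg_n$. One must also confirm that $\epsilon$ really is injective with $J_1\cap A\wr\Sg_n = 0$ inside $D_n(A)$ — this follows because the basis of $A\wr\Sg_n$ consists of diagrams with no horizontal edges, which are linearly independent from the diagrams spanning $J_1$ in the diagram basis of $D_n(A)$. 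I expect the main (though still routine) obstacle to be bookkeeping: making sure the multiplication rule for $A$-Brauer diagrams, with its trace factors and the involution $*$ applied to reverse orientations, really does descend to the usual wreath-product multiplication on $D_n(A)/J_1$, i.e. that no trace scalars or orientation corrections appear when both factors already have no horizontal edges. This is immediate from the product rule since concatenating two permutation diagrams produces no loops and no new horizontal edges.
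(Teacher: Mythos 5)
Your proposal is correct and follows essentially the same route as the paper: $\epsilon$ is the canonical inclusion, and $\pi$ is the quotient map $D_n(A)\to D_n(A)/J_1\cong A\wr\Sg_n$ killing every diagram with a horizontal edge, so that $\pi\circ\epsilon=1_{A\wr\Sg_n}$. Your identification of $D_n(A)/J_1$ with the $l=0$ layer of the iterated inflation, and your checks that no trace scalars arise and that $J_1\cap A\wr\Sg_n=0$, merely make explicit what the paper states directly.
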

\begin{proof}
The subalgebra $A\wr \Sg_n$ of $D_n(A)$ is a free $R$-module generated by $s_{i}$, $h_j^a$ for $1\leq i\leq n-1$, $1 \leq j \leq n$, and $a \in A$. Let $\epsilon$ be a canonical inclusion of $A\wr\Sg_n$ into $D_n(A)$, where the unit element of $A\wr \Sg_n$ is mapped to the unit element of $D_n(A)$. Let us define a map $\pi: D_n(A) \twoheadrightarrow A\wr\Sg_n$ in such a way that $\pi$ takes each $d\in D_n(A)$ to a diagram that has no horizontal edges. Since $\pi$ is surjective, the $\mathrm{kernel}$ of $\pi$ contains all $A$-Brauer diagrams with at least one horizontal edge. Therefore, $\ker \pi$ coincides with $J_1$, and $D_n(A)/J_1 \cong A\wr \Sg_n$. Thus, $\pi \circ \epsilon$ is the identity on $A\wr \Sg_n$. 
\end{proof}
From now on, assume $\delta$ is invertible in $R$, where $R$ is a commutative Noetherian domain. 
%

\begin{proposition}\label{corner rings for A Brauer algebra}
If $0 \leq l \leq \lfloor\frac{n}{2}\rfloor$, then $D_{n-2l}(A)$ is isomorphic to the corner ring $e_lD_n(A) e_l$.
\end{proposition}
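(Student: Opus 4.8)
The plan is to exhibit an explicit isomorphism $e_l D_n(A) e_l \cong D_{n-2l}(A)$ by tracking what the idempotent $e_l$ (Figure \ref{Idempotent of the A-Brauer algebra when delta is nonzero}) does to an $A$-Brauer diagram under two-sided multiplication. Recall that $e_l$ is (up to the scalar $\delta^{-l}$) the diagram with $n-2l$ vertical through-strands on the first $n-2l$ nodes and $l$ horizontal edges connecting the last $2l$ nodes in both rows, all edges carrying the trivial label $1_A$. Given $d \in D_n(A)$, the product $e_l \cdot d \cdot e_l$ forces the top and bottom configurations of the resulting diagram to agree with those of $e_l$ on the last $2l$ nodes; after absorbing loop-traces (which are all $\mathrm{tr}(1)=\delta$, hence cancel against the normalizing factor $\delta^{-l}$ from each copy of $e_l$, using invertibility of $\delta$), every diagram in $e_l D_n(A) e_l$ is of the form "$e_l$'s horizontal part on the last $2l$ nodes, glued onto an arbitrary $A$-Brauer diagram on the first $n-2l$ nodes." This identifies $e_l D_n(A) e_l$, as an $R$-module, with $D_{n-2l}(A)$ via the map $\theta$ that restricts such a diagram to its first $n-2l$ columns.

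The key steps, in order: (1) Describe $e_l$ precisely and compute $e_l^2 = e_l$ (the $l$ interior loops each contribute $\mathrm{tr}(1)=\delta$, matched by the two factors $\delta^{-l}$, leaving a single $\delta^{-l}$); this is already implicit in the excerpt but I would record it. (2) Show $\theta: e_l D_n(A)e_l \to D_{n-2l}(A)$, $d \mapsto$ (first $n-2l$ columns of $d$), is a well-defined $R$-linear bijection: well-definedness because any $d$ in the corner ring has its columns $n-2l+1,\dots,n$ rigidly determined by $e_l$ on both sides, so no information is lost; surjectivity because any $D_{n-2l}(A)$-diagram $d'$ lifts to $e_l \cdot (d' \sqcup \mathrm{id}) \cdot e_l$ appropriately normalized; injectivity since distinct diagrams on $n-2l$ nodes extend to distinct diagrams on $n$ nodes. (3) Verify $\theta$ is multiplicative: the concatenation product in $D_n(A)$ restricted to the corner decouples into the concatenation product on the first $n-2l$ columns (giving the product in $D_{n-2l}(A)$) and the "internal" gluing among the last $2l$ columns, which only ever produces closed loops labeled $1_A$; each such loop yields a factor $\delta$ that is exactly cancelled by the $\delta^{-l}$ bookkeeping, so no spurious scalar survives. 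Hence $\theta$ is an $R$-algebra isomorphism. I would also note $\theta$ is compatible with the involution $*$ and with the orientation-reversal convention, though that is not needed for the statement.

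The main obstacle I anticipate is the careful accounting of scalars arising from closed loops formed among the last $2l$ columns when multiplying two corner elements: one must check that the normalization $\delta^{-l}$ built into $e_l$ (and the fact that $e_l$ appears on both sides, and that the product of two corner elements again has an $e_l$ "in the middle" after re-normalizing) makes every such $\delta$-factor cancel, so that the bijection $\theta$ is genuinely multiplicative rather than multiplicative-up-to-a-power-of-$\delta$. This is precisely where invertibility of $\delta$ is used, and it is the reason the cases $\delta = 0$, $n$ even were excluded earlier. A secondary, purely notational difficulty is handling the edge labels from $A$ and the orientation/$*$ conventions so that the labels on columns $1,\dots,n-2l$ multiply exactly as they do in $D_{n-2l}(A)$; since $e_l$ carries only trivial labels on the affected nodes, this is routine, but it should be stated. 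Everything else — well-definedness, $R$-linearity, the bijection on bases — is a direct diagrammatic verification.
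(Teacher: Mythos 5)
Your proposal is correct and takes essentially the same route as the paper: both hinge on the observation that sandwiching with $e_l$ rigidifies the last $2l$ columns (arcs with trivial labels, no edges crossing into the first $n-2l$ columns), and your restriction map $\theta$ is precisely the inverse of the paper's isomorphism $\phi\colon d \mapsto e_l d e_l$, where $d$ is embedded by adding identity strands. The only difference is one of emphasis: you spell out the loop-trace versus $\delta^{-l}$ cancellation needed for multiplicativity, which the paper subsumes under the assertion that $\phi$ is ``clearly'' a surjective homomorphism.
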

\begin{proof}
The algebra $e_l D_n(A)e_l$ has a basis consisting of diagrams in $D_n(A)$ with at least $l$ horizontal edges in the top (resp. bottom) row placed at the right side of the diagram. Consider the map $\phi$ defined by
\begin{align*}
\phi: D_{n-2l}(A) \longrightarrow &e_l D_n(A)e_l\\
d \longmapsto & e_lde_l,
\end{align*}
where we identify $d$ inside $D_n(A)$ by adding vertical edges from the vertices $j$ in top row to the vertices $j$ in bottom row, where $n-2l+1 \leq j \leq n$. Clearly, $\phi$ is a surjective homomorphism, and $\ker(\phi)$ contains only the zero element. This implies $\phi$ is injective, and hence, $D_{n-2l}(A) \cong e_l D_n(A)e_l$.
\end{proof}

\begin{theorem}\label{corner split quotient for A Brauer algebra}
If $0 \leq l \leq \lfloor \frac{n}{2} \rfloor$, then the algebra $A\wr \Sg_{n-2l}$ is a corner split quotient of $D_n(A)$.
\end{theorem}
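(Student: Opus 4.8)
The strategy is to verify the definition of corner split quotient directly, using the iterated inflation of $D_n(A)$ (Theorem~\ref{iterated inflation of A-brauer algebra}) together with Propositions~\ref{split quotient for A-Brauer algebra} and~\ref{corner rings for A Brauer algebra}. Write $e = e_l$, $eAe = e_l D_n(A) e_l$ and $B = A\wr\Sg_{n-2l}$. By Proposition~\ref{corner rings for A Brauer algebra}, $eAe\cong D_{n-2l}(A)$, and by Proposition~\ref{split quotient for A-Brauer algebra} applied with $n$ replaced by $n-2l$, the algebra $B$ is a split quotient of $D_{n-2l}(A)$, via the canonical inclusion and the projection killing every diagram with a horizontal edge. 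Transporting along $eAe\cong D_{n-2l}(A)$ realizes $B$ as a split quotient of the corner ring $eAe$, viewed as a subring of it; write $\epsilon\colon B\hookrightarrow eAe$ for the resulting embedding. It remains to produce the bimodule $S$.

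I would take $S := e_l D_n(A)/e_l J_{l+1}$. Since $J_{l+1}$ is a two-sided ideal of $D_n(A)$ (Lemma~\ref{Jl two sided ideal of A-Brauer algebras}), $e_l J_{l+1}$ is a sub-bimodule of the $eAe$-$D_n(A)$-bimodule $e_l D_n(A)$, so $S$ is an $eAe$-$D_n(A)$-bimodule, hence (via $eAe\cong D_{n-2l}(A)$) a $D_{n-2l}(A)$-$D_n(A)$-bimodule. The case $l=0$ is trivial ($e_0=1$, $S = D_n(A)/J_1\cong B$), so assume $l\ge 1$. A first observation is that $e_l D_n(A) = e_l J_l$: every product $e_l d$ has at least $l$ horizontal edges, so $e_l D_n(A)\subseteq J_l$, and as $e_l$ is idempotent (here $\delta$ is invertible) we get $e_l D_n(A) = e_l\cdot e_l D_n(A)\subseteq e_l J_l\subseteq e_l D_n(A)$. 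Hence $S = e_l(J_l/J_{l+1})$, which I can compute inside the layer $J_l/J_{l+1}\cong V(n,l,A)\otimes_R V(n,l,A)\otimes_R A\wr\Sg_{n-2l}$ of Lemma~\ref{each layer corresponds to iterated inflation}.

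Under that isomorphism $e_l$ corresponds to $\delta^{-l}(v_0\otimes v_0\otimes\mathrm{id})$, where $v_0$ is the $(n,l)$ $A$-labeled partial diagram sitting in the top (equivalently bottom) row of $e_l$, and $\phi(v_0,v_0) = \delta^{l}\,\mathrm{id}$. Left multiplication gives $e_l\cdot(e'\otimes f'\otimes\pi') = \delta^{-l}\,v_0\otimes f'\otimes\phi(v_0,e')\pi'$, and choosing $e'=v_0$ shows the image is all of $v_0\otimes V(n,l,A)\otimes A\wr\Sg_{n-2l}$, so that $S\cong V(n,l,A)\otimes_R A\wr\Sg_{n-2l}$ as $R$-modules. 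For the left $B$-action, $\epsilon(b)$ corresponds to $\delta^{-l}(v_0\otimes v_0\otimes b)$, whence $\epsilon(b)\cdot(v_0\otimes f'\otimes\pi) = v_0\otimes f'\otimes b\pi$; therefore $S$ is a free, hence projective, left $B$-module (with $B$ acting by left multiplication on the $A\wr\Sg_{n-2l}$-factor). Finally, right multiplication by $e_l$ sends $v_0\otimes f'\otimes\pi$ to a scalar multiple of $v_0\otimes v_0\otimes\pi$, so $Se_l$ is the $R$-span of $\{v_0\otimes v_0\otimes\pi:\pi\in A\wr\Sg_{n-2l}\}$, on which $(v_0\otimes v_0\otimes\pi)\cdot\epsilon(b) = v_0\otimes v_0\otimes\pi b$; thus $Se_l\cong B$ as right $B$-modules. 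All the conditions hold, so $B = A\wr\Sg_{n-2l}$ is a corner split quotient of $D_n(A)$ with respect to $e_l$.

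The main obstacle is the bookkeeping in the last paragraph: one must track carefully the top and bottom half-diagrams, the normalizing powers of $\delta^{-1}$ hidden inside $e_l$, and the bilinear form $\phi$ of the iterated inflation, and in particular confirm that the induced left $B$-action on $S$ is the free (regular) action and not merely some $B$-module structure — this is precisely what makes $S$ projective over $B$, and hence what makes Theorem~\ref{general corner split quotient} applicable.
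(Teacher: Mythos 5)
Your argument is correct and is essentially the paper's proof: both rest on Propositions \ref{split quotient for A-Brauer algebra} and \ref{corner rings for A Brauer algebra}, and your bimodule $S=e_lD_n(A)/e_lJ_{l+1}$ coincides (up to isomorphism) with the paper's $S_l=A\wr\Sg_{n-2l}\otimes_{e_lD_n(A)e_l}e_lD_n(A)\cong e_l\bigl(D_n(A)/J_{l+1}\bigr)$, the only difference being that you verify projectivity of the left $A\wr\Sg_{n-2l}$-action and the isomorphism $Se_l\cong A\wr\Sg_{n-2l}$ by explicit computation inside the inflation layer $V(n,l,A)\otimes V(n,l,A)\otimes A\wr\Sg_{n-2l}$ (using $\phi(v_0,v_0)=\delta^l\,\mathrm{id}$), whereas the paper argues with generators of the tensor product. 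One cosmetic slip: right multiplication by $e_l$ sends $v_0\otimes f'\otimes\pi$ to a scalar multiple of $v_0\otimes v_0\otimes\pi\sigma$ with $\sigma$ determined by $\phi(f',v_0)$ (not of $v_0\otimes v_0\otimes\pi$ itself), but this does not affect your conclusion that $Se_l$ is the $R$-span of the elements $v_0\otimes v_0\otimes\pi$ and hence isomorphic to $A\wr\Sg_{n-2l}$ as a right module.
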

\begin{proof}
 By Proposition \ref{split quotient for A-Brauer algebra}, $A\wr \Sg_{n-2l}$ is a split quotient of $D_{n-2l}(A)$ and is isomorphic to the corner ring $e_l D_n(A) e_l$ by Proposition \ref{corner rings for A Brauer algebra}. Therefore, we only need to show that there is an $e_lD_n(A)e_l$-$D_n(A)$-bimodule $S_l$, which is projective as a left $A\wr \Sg_{n-2l}$-module and satisfies $S_le_l \cong A\wr \Sg_{n-2l}$ as a right $A\wr \Sg_{n-2l}$-modules.

As the right $D_n(A)$-module, $e_lD_n(A)$ is a projective module generated by $A$-Brauer diagrams with at least $l$ horizontal edges, where the top row contains the partial diagram of $e_l$. Recall that $e_lD_n(A)e_l$ consists of $A$-Brauer diagrams having at least $l$ horizontal edges with the partial diagram of $e_l$ on its top and bottom configurations. Therefore, $e_lD_n(A)$ is a left $e_lD_n(A)e_l$-module. Now, the right action of $e_lD_n(A)e_l$ on $A\wr \Sg_{n-2l}$ is given as follows: let $d \in D_n(A)$, and $\sigma \in A\wr \Sg_{n-2l}$
\begin{equation*}
\sigma \cdot d =
 \begin{cases}
  \sigma d &; \text{ if } d \text{ has exactly  } l \text{ horizontal edges}\\
 0 &;\text{ otherwise. } 
 \end{cases}
\end{equation*}
If $d$ has more than $l$ horizontal edges, then the quotient map $ \alpha: e_l D_n(A) e_l \longrightarrow e_l (D_n(A)/J_{l+1})e_l ~\big(\cong \frac{e_l D_n(A)e_l}{e_l J_{l+1} e_l} \big)$ is defined by $\alpha(e_l de_l)= e_l d e_l \mod (J_{l+1})$. Clearly, $\ker \alpha$ contains all diagrams having at least $l+1$ horizontal edges. The map $\alpha$ induces an action of $e_lD_n(A)e_l$ on $A\wr \Sg_{n-2l}$. Define the $e_lD_n(A) e_l$-$D_n(A)$-bimodule $S_l$ to be $ A\wr\Sg_{n-2l}\otimes_{e_l D_n(A) e_l}e_l D_n(A)$.

Next, we claim that $S_l$ is a free left $A\wr \Sg_{n-2l}$-module. As an $R$-module, $S_l$ is generated by elements of the form $\sigma \otimes e_ld $, where $d \in D_n(A)$, and $\sigma \in A\wr \Sg_{n-2l}$. If $d$ has more than $l$ horizontal edges, say $m$, then we have 
\begin{align*}
\sigma \otimes e_ld  &=  \sigma \otimes e_md'  \\
&= \sigma e_m  \otimes  d'\\
& = 0 \otimes d' =0,
\end{align*}
where $d' \in D_n(A)$ contains the bottom configuration of $d$. The last equality holds because the idempotent $e_m$ is in $\ker \alpha$. In particular, $A\wr \Sg_{n-2l} \otimes_{e_lD_n(A)e_l} J_{l+1}e_l$ vanishes. Consequently, all the generators of $S_l$ are of the form $\sigma \otimes d$, where $d$ has precisely $l$ horizontal edges containing the partial diagram of $e_l$ at its top, and $l$ horizontal edges at the bottom can be arranged arbitrarily. That is, the partial diagram of its bottom configuration lies in $V(n,l,A)$. Therefore, the generators must be of the form $ 1_{A\wr \Sg_{n-2l}} \otimes \sigma d.$ Hence, $S_{l} = (A\wr \Sg_{n-2l})^{\oplus{n_l}}$ where $n_l$ is the rank of $V(n,l,A)$. Thus, $S_l$ is a free $A \wr\Sg_{n-2l}$-$D_n(A)$-bimodule. Thus, $S_{l}$ is projective as a left $A\wr \Sg_{n-2l}$-module.

On the other hand, the right action of $e_l D_n(A)e_l$ on $A\wr \Sg_{n-2l}$ coincides with the action of $e_l(D_n(A)/J_{l+1})e_l$ on $A\wr \Sg_{n-2l}$. Hence, we have
\begin{align}
S_l&=A\wr\Sg_{n-2l}\otimes_{e_l D_n(A) e_l}e_l D_n(A) \nonumber \noindent\\
&\cong A\wr\Sg_{n-2l}\otimes_{e_l D_n(A) e_l}e_l (D_n(A)/J_{l+1})\nonumber \noindent\\
&\cong A\wr\Sg_{n-2l}\otimes_{e_l (D_n(A)/J_{l+1}) e_l}e_l (D_n(A)/J_{l+1})\nonumber \noindent\label{dirc pdt has exactly l edges for cyclotomic Brauer}\\
&\cong A\wr\Sg_{n-2l}\otimes_{A\wr\Sg_{n-2l}}e_l (D_n(A)/J_{l+1})\\
&\cong e_l (D_n(A)/J_{l+1}),\nonumber \noindent
\end{align}
where (\ref{dirc pdt has exactly l edges for cyclotomic Brauer}) follows the fact that $e_l(D_n(A)/J_{l+1})e_l$ has exactly $l$ horizontal edges so that it will be isomorphic to $A\wr\Sg_{n-2l}$.
One can easily obtain the following right $A\wr \Sg_{n-2l}$-module isomorphism:
\begin{align*}
S_le_l&\cong\big(e_l (D_n(A)/J_{l+1})\big)e_l\\
&\cong A\wr \Sg_{n-2l}.
\end{align*}
This ends the proof.
\end{proof}

\begin{remark} 
Consider the categories $\text{\textbf{mod}-}A\wr \Sg_{n-2l}$, and $\text{\textbf{mod}-}D_n(A)$. Using the bimodule structure of $S_l$, we define the layer induction functor $\ind_l(M)=M \otimes_{e_lD_n(A)e_l} S_l$, where $M \in \text{\textbf{mod}-}A\wr \Sg_{n-2l}$ and $0 \leq l \leq \lfloor \frac{n}{2} \rfloor$. 
One can define the restriction functor by $\Res_l (N)= N \otimes_{D_n(A)} D_n(A)e_l$ for all $N \in \text{\textbf{mod}-}D_n(A)$ and $\Res_l$ keeps all the layers. 
\end{remark}

\begin{corollary}\label{split pair A Brauer}
For $ 0 \leq l \leq [\frac{n}{2}]$, the pair $(\ind_l,\Res_l)$ forms an exact split pair between $\text{\textbf{mod}-} A\wr \Sg_{n-2l} $ and $\text{\textbf{mod}-}D_n(A)$.
\end{corollary}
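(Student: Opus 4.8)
The plan is to deduce Corollary \ref{split pair A Brauer} directly from Theorem \ref{general corner split quotient} by verifying that all hypotheses of that theorem are met with the specific data assembled in Theorem \ref{corner split quotient for A Brauer algebra}. Concretely, one sets $A = D_n(A)$, $e = e_l$, $B = A\wr\Sg_{n-2l}$, and $S = S_l$, the $e_l D_n(A)e_l$-$D_n(A)$-bimodule constructed above. Theorem \ref{corner split quotient for A Brauer algebra} already establishes that $A\wr\Sg_{n-2l}$ is a corner split quotient of $D_n(A)$ with respect to $e_l$: it is a split quotient of $e_lD_n(A)e_l \cong D_{n-2l}(A)$ (via Propositions \ref{split quotient for A-Brauer algebra} and \ref{corner rings for A Brauer algebra}), the bimodule $S_l$ is projective (in fact free) as a left $A\wr\Sg_{n-2l}$-module, and $S_le_l \cong A\wr\Sg_{n-2l}$ as right $A\wr\Sg_{n-2l}$-modules. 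Therefore Theorem \ref{general corner split quotient} applies verbatim.

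Next I would unwind the two functors produced by Theorem \ref{general corner split quotient} and match them with $\ind_l$ and $\Res_l$. The theorem gives $F = -\otimes_{D_n(A)} D_n(A)e_l$, which is by definition $\Res_l$; and $G = -\otimes_{A\wr\Sg_{n-2l}} (A\wr\Sg_{n-2l}) \otimes_{e_lD_n(A)e_l} S_l$. Since tensoring a right $A\wr\Sg_{n-2l}$-module $M$ over $A\wr\Sg_{n-2l}$ with $A\wr\Sg_{n-2l}$ itself is naturally isomorphic to $M$, we get $G(M) \cong M \otimes_{e_lD_n(A)e_l} S_l = \ind_l(M)$, where the $e_lD_n(A)e_l$-module structure on $M$ is the one pulled back along the split surjection $e_lD_n(A)e_l \twoheadrightarrow A\wr\Sg_{n-2l}$ used throughout. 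So the pair $(\ind_l, \Res_l)$ is, up to natural isomorphism, exactly the pair $(G, F)$ of Theorem \ref{general corner split quotient}; note the ordering, since our split pair convention puts $F$ first with $F\circ G$ an auto-equivalence, and here the composite $\Res_l \circ \ind_l$ (equivalently $F\circ G$ in the theorem's notation) is the identity on $\text{\textbf{mod}-}A\wr\Sg_{n-2l}$.

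I should also record that both functors are exact. $\Res_l = -\otimes_{D_n(A)} D_n(A)e_l$ is exact because $D_n(A)e_l$ is projective as a left $D_n(A)$-module (it is a direct summand of $D_n(A)$, being generated by an idempotent), hence flat; this is precisely the setup of the corner-ring exact split pair recalled in Section \ref{Preliminaries}, where one assumes $Ae$ is $eAe$-projective, and here $e_l$ is idempotent so $D_n(A)e_l$ is even $D_n(A)$-projective. For $\ind_l$, exactness is guaranteed by Theorem \ref{general corner split quotient} itself, but it also follows from the freeness of $S_l$ as a left $A\wr\Sg_{n-2l}$-module established in Theorem \ref{corner split quotient for A Brauer algebra}: $\ind_l(M) = M\otimes_{e_lD_n(A)e_l} S_l \cong M \otimes_{e_l D_n(A)e_l} e_l(D_n(A)/J_{l+1})$, and by the chain of isomorphisms in that proof this is a tensor product along $A\wr\Sg_{n-2l}$ with the free module $(A\wr\Sg_{n-2l})^{\oplus n_l}$, so $\ind_l$ is naturally isomorphic to $(-)^{\oplus n_l}$, which is patently exact.

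The main obstacle is not any single hard estimate but rather bookkeeping: one must be careful that the $e_lD_n(A)e_l$-action on $A\wr\Sg_{n-2l}$ used to form $S_l$ is the one factoring through $e_l(D_n(A)/J_{l+1})e_l \cong A\wr\Sg_{n-2l}$ (so that everything with strictly more than $l$ horizontal edges acts as zero), that this is compatible with the embedding $B \hookrightarrow e_lD_n(A)e_l$ required by the definition of corner split quotient, and that the identifications $e_lD_n(A)e_l \cong D_{n-2l}(A)$ and $D_n(A)/J_1 \cong A\wr\Sg_n$ are applied at the correct indices. Once these identifications are in place, the statement is an immediate translation of Theorem \ref{general corner split quotient}, and the proof is short. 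I would therefore write: by Theorem \ref{corner split quotient for A Brauer algebra}, $A\wr\Sg_{n-2l}$ is a corner split quotient of $D_n(A)$ with respect to $e_l$ via the bimodule $S_l$; applying Theorem \ref{general corner split quotient} with this data, the functors $F = -\otimes_{D_n(A)} D_n(A)e_l = \Res_l$ and $G = -\otimes_{A\wr\Sg_{n-2l}} (A\wr\Sg_{n-2l})\otimes_{e_lD_n(A)e_l} S_l \cong \ind_l$ form an exact split pair, and since $\Res_l\circ\ind_l$ is the identity on $\text{\textbf{mod}-}A\wr\Sg_{n-2l}$ this is exactly the assertion.
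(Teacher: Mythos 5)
Your proposal is correct and follows the same route as the paper: invoke Theorem \ref{corner split quotient for A Brauer algebra} to get that $A\wr\Sg_{n-2l}$ is a corner split quotient of $D_n(A)$ with respect to $e_l$ via $S_l$, then apply Theorem \ref{general corner split quotient}. Your extra unwinding identifying $F=-\otimes_{D_n(A)}D_n(A)e_l$ with $\Res_l$ and $G\cong -\otimes_{e_lD_n(A)e_l}S_l$ with $\ind_l$, and the exactness remarks, are accurate but go beyond what the paper records, which states the conclusion in two lines.
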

\begin{proof}
If $0 \leq l \leq \lfloor \frac{n}{2} \rfloor$, then $A\wr \Sg_{n-2l}$ is a corner split quotient of $D_n(A)$ by Theorem \ref{corner split quotient for A Brauer algebra}. By applying Theorem \ref{general corner split quotient}, $(\ind_l, \Res_l)$ form an exact split pair of functors between $\text{\textbf{mod}-}D_n(A)$ and $\text{\textbf{mod}-}A\wr \Sg_{n-2l}$.
\end{proof}
Note that in the case of $\delta=0$ and $n$ is odd, the statements in Proposition \ref{split quotient for A-Brauer algebra}, Proposition \ref{corner rings for A Brauer algebra} and Corollary \ref{split pair A Brauer} are still valid.

\section{The Cyclotomic Brauer algebra}\label{Cyclotomic Brauer algebra}
 For fixed natural numbers $r$ and $n$, let $R$ be a commutative ring containing a primitive $r$-th root of unity and parameters $\delta_0,\delta_1, \cdots , \delta_{r-1}$.  Let $\bdel=(\delta_0 , \cdots, \delta_{r-1})\in R^{r}$ and $A$ be the group algebra of the cyclic group $\Z/r\Z$. The associated $A$-Brauer algebra is known as the cyclotomic Brauer algebra $\B_n^r(\bdel)$. These algebras were studied in \cite{BCV,RX,RY1}. Similar to the $A$-Brauer diagrams, all the edges are equipped with an orientation. Reversing the orientation results in relabeling the edges using the inverse element in $\Z/r\Z$. It is generated by $s_{i}$, $e_{i}$ and $h_{j}^{m}$ subject to the relations [\cite{RX}, Section 2], for $1\leq i \leq n-1$, $1 \leq j \leq n$ and $m \in \Z/r\Z$. 

Let $0 \leq l  \leq \lfloor\frac{n}{2} \rfloor$. If $\bdel$ is invertible, then we consider the idempotent as in Figure \ref{Idempotent of the A-Brauer algebra when delta is nonzero}. If $\bdel=0$ and $n$ is odd, then the idempotent is given as in Figure \ref{Idempotent of the A-Brauer algebra when delta 0}. 


\subsection{The Generalized symmetric group} \label{Generalized symmetric group}

Let $\h_{n}^{r}$ be the group algebra of the generalized symmetric group $\Z/r\Z \wr\Sg_n$. It is a subalgebra of $\B_n^r(\bdel)$ generated by $s_{i}$, and $h_{j}^{m}$ for $1 \leq i \leq n-1$, $1 \leq j \leq n$, and $m \in \Z/r\Z$. The group algebra  $\h_{n}^{r}$ is cyclic cellular with cell datum $(\h_{n}^{r}, *, \Lambda_n^{r}, \unrhd, \mathcal{M}, \mathcal{B} )$, see [\cite{GG}, Theorem 4.1] where $\Lambda_n^{r}$ is the set of all multipartitions of $n$ with $r$ parts which is a poset under the dominance order for the multipartitions.  


From now on, assume $R$ to be a commutative Noetherian domain with an invertible $\bdel$. Let $V_l$ be the free $R$-module generated by all $(n,l)$-$\Z/r\Z$ labeled partial diagrams. 
The algebra $\B_n^r(\bdel)$ can be decomposed as an iterated inflation of $\h_{n-2l}^r$ along $V_l$, 
\begin{equation*}
 \B_n^r(\bdel) \cong \bigoplus_{l=0}^{\lfloor\frac{n}{2}\rfloor} V_l\otimes V_l \otimes \h_{n-2l}^r,
\end{equation*}
where $\h_{n-2l}^r$ is a cellular algebra, see [\cite{BCV}, Theorem 3.1.3]. The involution is given by reflecting the diagram along its horizontal axis.

Define the index set $\Lambda$ by $\Lambda:= \{ (l,\bolam): 0 \leq l \leq \lfloor\frac{n}{2}\rfloor, \bolam \in \Lambda_n^r\}$. We extend the dominance order of $\h_n^r$ to $\B_n^r(\bdel)$ to get the dominance order ``$\geq$" on $\Lambda$, which is defined as follows: 
\begin{equation*}
(l,\bolam) \geq (m,\bomu) \text{ if either } l< m, \text{ or } l=m \text{ and } \bomu \trianglerighteq \bolam.
\end{equation*}

The cell modules for $\B_n^r(\bdel)$ are indexed by $(l, \bolam) \in \Lambda$ by [\cite{RY1}, Theorem 5.11].

\subsection{Exact split pair ($\B_n^r(\bdel), \h_n^r$)}\label{exact split pair for cyclotomic Brauer algebra}
In this subsection, we show that there exists an exact split pair of functors between $\text{\textbf{mod}-}\B_n^r(\bdel)$ and $\text{\textbf{mod}-}\h_n^r$. As an application of this, we obtain the global dimension of $\B_n^r(\bdel)$. 

Let us define the induction and restriction functor as follows: 
\begin{align*}
 \ind_l:\text{\textbf{mod}-}& \h_{n-2l}^r  \longrightarrow  \text{\textbf{mod}-} \B_n^r(\bdel)   &\Res_l :   \text{\textbf{mod}-} \B_n^r(\bdel) \longrightarrow \text{\textbf{mod}-}\h_{n-2l}^r \\
& M \longmapsto  M \otimes_{e_l \B_n^r(\bdel) e_l} e_l\B_n^r(\bdel) &N \longmapsto  N\otimes_{\B_n^r(\bdel)} \B_n^r(\bdel)e_l.
\end{align*}
\begin{corollary}\label{exact split pair cyclotomic brauer algebra for split pair}
  The pair $(\ind_l, \Res_l)$ forms an exact split pair of functors between $\text{\textbf{mod}-}\B_n^r(\bdel)$ and $\text{\textbf{mod}-}\h_{n-2l}^r$, where $0 \leq l \leq \lfloor\frac{n}{2} \rfloor$.
\end{corollary}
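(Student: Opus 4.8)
The plan is to reduce the statement to Theorem \ref{general corner split quotient}, exactly as was done for the $A$-Brauer algebra in Corollary \ref{split pair A Brauer}. Indeed, $\B_n^r(\bdel)$ is the $A$-Brauer algebra $D_n(A)$ for $A$ the group algebra of $\Z/r\Z$, with involution $g\mapsto g^{-1}$ and trace $\mathrm{tr}(g^i)=\delta_i$, and under this identification $\h_{n-2l}^r = A\wr\Sg_{n-2l}$; since $\bdel$ is invertible over the commutative Noetherian domain $R$, the idempotent $e_l$ of Figure \ref{Idempotent of the A-Brauer algebra when delta is nonzero} is available, and the iterated inflation $\B_n^r(\bdel)\cong\bigoplus_{l=0}^{\lfloor\frac{n}{2}\rfloor}V_l\otimes V_l\otimes\h_{n-2l}^r$ quoted from [\cite{BCV}, Theorem 3.1.3] is the specialization of Theorem \ref{iterated inflation of A-brauer algebra}. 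One may therefore invoke Corollary \ref{split pair A Brauer} directly; for self-containedness I would instead re-run the three preparatory steps that go into it.

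First I would show that $\h_n^r$ is a split quotient of $\B_n^r(\bdel)$, mimicking Proposition \ref{split quotient for A-Brauer algebra}: the canonical inclusion $\epsilon\colon\h_n^r\hookrightarrow\B_n^r(\bdel)$ sends unit to unit, and the $R$-linear map $\pi\colon\B_n^r(\bdel)\twoheadrightarrow\h_n^r$ which sends every diagram carrying a horizontal edge to $0$ and fixes the permutation-type diagrams is a surjective algebra homomorphism with $\ker\pi=J_1$, so $\B_n^r(\bdel)/J_1\cong\h_n^r$ and $\pi\circ\epsilon=\mathrm{id}$. Next I would establish the corner-ring identification $e_l\B_n^r(\bdel)e_l\cong\B_{n-2l}^r(\bdel)$, mimicking Proposition \ref{corner rings for A Brauer algebra}: the map $d\mapsto e_lde_l$, where $d\in\B_{n-2l}^r(\bdel)$ is placed on the first $n-2l$ strands and completed by through-strands on the remaining $2l$ vertices, is a ring isomorphism. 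Combining these, $\h_{n-2l}^r$ is a split quotient of the corner ring $e_l\B_n^r(\bdel)e_l$.

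It then remains to exhibit the bimodule required by the definition of a corner split quotient. Following Theorem \ref{corner split quotient for A Brauer algebra}, I would set $S_l:=\h_{n-2l}^r\otimes_{e_l\B_n^r(\bdel)e_l}e_l\B_n^r(\bdel)$, where $e_l\B_n^r(\bdel)e_l$ acts on $\h_{n-2l}^r$ through the quotient $e_l\B_n^r(\bdel)e_l\twoheadrightarrow e_l(\B_n^r(\bdel)/J_{l+1})e_l\cong\h_{n-2l}^r$, and then check that $S_l$ is free as a left $\h_{n-2l}^r$-module: a generator $\sigma\otimes e_ld$ with $d$ having more than $l$ horizontal edges vanishes because the relevant idempotent lies in the kernel of this quotient, whence $\h_{n-2l}^r\otimes_{e_l\B_n^r(\bdel)e_l}J_{l+1}e_l=0$, $S_l\cong e_l(\B_n^r(\bdel)/J_{l+1})$ is free of $R$-rank equal to that of $V_l$, and $S_le_l\cong\h_{n-2l}^r$ as right $\h_{n-2l}^r$-modules. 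Thus $\h_{n-2l}^r$ is a corner split quotient of $\B_n^r(\bdel)$ with respect to $e_l$, and Theorem \ref{general corner split quotient} with $A=\B_n^r(\bdel)$, $B=\h_{n-2l}^r$, $e=e_l$, $S=S_l$ gives that $\Res_l=-\otimes_{\B_n^r(\bdel)}\B_n^r(\bdel)e_l$ and $\ind_l=-\otimes_{\h_{n-2l}^r}S_l$ form an exact split pair; the latter functor agrees with $-\otimes_{e_l\B_n^r(\bdel)e_l}e_l\B_n^r(\bdel)$ because $M\otimes_{e_l\B_n^r(\bdel)e_l}\h_{n-2l}^r\cong M$ for every $M\in\text{\textbf{mod}-}\h_{n-2l}^r$. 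The only step that demands genuine care is the freeness of $S_l$, i.e.\ the collapse of the tensor product under multiplication; everything else transcribes Section \ref{$A$-Brauer algebra} verbatim.
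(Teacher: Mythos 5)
Your proposal is correct and follows the same route as the paper: the paper's own proof simply observes that $\B_n^r(\bdel)$ is the $A$-Brauer algebra for $A=R[\Z/r\Z]$, so $\h_{n-2l}^r$ is a corner split quotient of $\B_n^r(\bdel)$ by Theorem \ref{corner split quotient for A Brauer algebra}, and then invokes Theorem \ref{general corner split quotient}. Your re-running of the three preparatory steps (split quotient, corner ring, and the bimodule $S_l$) is just an expanded transcription of that same argument, not a different method.
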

\begin{proof}
By Theorem \ref{corner split quotient for A Brauer algebra}, $\h_{n-2l}^r$ is a corner split quotient of $\B_n^r(\bdel)$. Therefore, by Theorem \ref{general corner split quotient}, the pair of functors $(\ind_l, \Res_l)$ form an exact split pair.
\end{proof}

The global dimension of an algebra $A$, denoted by $\mathrm{gl.dim ~}A$, is the maximum of the projective dimensions of the $A$-modules. The computation of the global dimension of $\B_n^r(\bdel)$ is challenging due to its non-self-symmetric nature, the existence of an exact split pair simplifies the calculation.
\begin{proposition}\label{global dimension of cyclotomic Brauer algebra}
If $R$ is a field of characteristic $p$, then $\B_n^r(\bdel)$ has finite global dimension if and only if $p>n$ and $p \nmid r$, or $p=0$.
\end{proposition}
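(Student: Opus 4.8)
The plan is to reduce the statement about $\B_n^r(\bdel)$ to the corresponding statement about the generalized symmetric group algebras $\h_{n-2l}^r$, using the exact split pairs $(\ind_l,\Res_l)$ of Corollary \ref{exact split pair cyclotomic brauer algebra for split pair} together with the iterated inflation decomposition $\B_n^r(\bdel)\cong\bigoplus_{l=0}^{\lfloor n/2\rfloor} V_l\otimes V_l\otimes\h_{n-2l}^r$. The key homological input is that an exact split pair $(F,G)$ with $F\circ G$ an auto-equivalence forces $G$ to be faithful and to preserve and reflect many homological invariants; in particular, if $\B_n^r(\bdel)$ has finite global dimension, then so does each corner ring $e_l\B_n^r(\bdel)e_l\cong\B_{n-2l}^r(\bdel)$ (by the standard fact that $eAe$ has finite global dimension when $A$ does and $Ae$ is $eAe$-projective), and pushing through the split quotient $\h_{n-2l}^r$ of $e_l\B_n^r(\bdel)e_l$ shows each $\h_{n-2l}^r$ has finite global dimension as well. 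Conversely, the iterated inflation structure, via [\cite{KX3}, Section 3] and the cellular stratification machinery, lets one bound $\mathrm{gl.dim}\,\B_n^r(\bdel)$ in terms of the global dimensions of the layers $\h_{n-2l}^r$ (finite layers, with $\delta$ invertible so that the inflation is along a "separable" datum, give a finite total global dimension).

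So the first step is to record the equivalence: $\B_n^r(\bdel)$ has finite global dimension $\iff$ every $\h_{n-2l}^r$ ($0\le l\le\lfloor n/2\rfloor$) has finite global dimension. One direction uses the exact split pair / corner ring argument above; the other direction uses the iterated inflation (each layer is an inflation of $\h_{n-2l}^r$ along the free module $V_l$, and with $\bdel$ invertible the bilinear form $\phi$ is nondegenerate in the relevant sense, so finiteness of global dimension is inherited layer by layer). The second step is to identify when $\h_n^r=R[\Z/r\Z\wr\Sg_n]$ has finite global dimension over a field $R$ of characteristic $p$. For a finite group $H$, the group algebra $RH$ has finite global dimension iff it is semisimple iff $p\nmid |H|$ (Maschke). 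Here $|\Z/r\Z\wr\Sg_n|=r^n\cdot n!$, so $RH$ is semisimple iff $p\nmid r$ and $p\nmid n!$, i.e. $p>n$ and $p\nmid r$, or $p=0$. Since $n-2l\le n$ for all $l$, the condition "$p>n$ and $p\nmid r$ or $p=0$" simultaneously makes every $\h_{n-2l}^r$ semisimple; and if it fails — say $p\le n$ or $p\mid r$ — then already $\h_n^r$ (the $l=0$ layer) has infinite global dimension, forcing the same for $\B_n^r(\bdel)$ by the first step. Combining the two steps yields the claim.

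The main obstacle I anticipate is the "converse" direction of the equivalence in step one: showing that finiteness of global dimension for all the layers $\h_{n-2l}^r$ actually implies finiteness for the inflated algebra $\B_n^r(\bdel)$. This is not automatic for arbitrary iterated inflations, and one must invoke the hypothesis that $\bdel$ is invertible — which is exactly what guarantees the idempotents $e_l$ behave well and that the stratification is "homologically nice" (e.g. that the layers give a stratification in the sense for which global dimension bounds are available, cf. the cellularly stratified algebra results of Hartmann–Henke–Koenig–Paget). I would handle this either by directly citing the relevant global-dimension estimate for iterated inflations along free bimodules with invertible parameter, or by an inductive argument on $n$: assuming the result for all $\B_{n-2}^r(\bdel)$ (so the corner ring $e_1\B_n^r(\bdel)e_1$ has finite global dimension), the quotient $\B_n^r(\bdel)/J_1\cong\h_n^r$ has finite global dimension by step two, and a two-term filtration / recollement argument ($J_1$ idempotent-generated with $J_1\cong e_1\B_n^r(\bdel)e_1$-projective pieces) glues these to finiteness for $\B_n^r(\bdel)$. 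The "only if" direction and step two are essentially routine once the framework is in place.
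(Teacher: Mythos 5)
Your ``only if'' direction and your identification of when $\h_{n-2l}^r$ has finite global dimension are correct and agree with the paper: the exact split pair of Corollary \ref{exact split pair cyclotomic brauer algebra for split pair} (already for $l=0$) together with [\cite{DK}, Corollary 1.6] gives $\mathrm{gl.dim}\,\B_n^r(\bdel)\geq \mathrm{gl.dim}\,\h_n^r$, and Maschke then forces $p>n$ and $p\nmid r$, or $p=0$. (Your parenthetical ``standard fact'' that $eAe$ inherits finite global dimension from $A$ is false without the projectivity of $Ae_l$ over $e_lAe_l$ --- Schur algebras versus symmetric group algebras are the classical counterexample --- so this step should be run through exactly the split-pair inequality above, which is what the paper does.)

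The gap is in the converse. What is needed there is precisely that, under $p>n$, $p\nmid r$ (or $p=0$) and invertible $\bdel$, the algebra $\B_n^r(\bdel)$ is quasi-hereditary; the paper simply cites [\cite{BCV}, Theorem 3.3.1] for this and then [\cite{KX2}, Theorem 1.1] to convert quasi-heredity into finite global dimension. Your substitute --- ``finite global dimension of all layers $\h_{n-2l}^r$ is inherited by the iterated inflation,'' or a recollement induction on $n$ --- is not a theorem as stated: semisimplicity of every layer does not by itself give finite global dimension of an iterated inflation (the Brauer algebra with $\delta=0$ and $n$ even over a characteristic $0$ field has semisimple layers $R\Sg_{n-2l}$ but infinite global dimension, which is exactly why the paper excludes that case), and the exact split pairs only compare the module categories of $\B_n^r(\bdel)$ and $\h_{n-2l}^r$; they do not glue finiteness of global dimension across the ideal $J_1$. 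To make your induction work you must verify that $J_1=\B_n^r(\bdel)e_1\B_n^r(\bdel)$ is a stratifying (under your hypotheses, heredity) ideal: the multiplication map $\B_n^r(\bdel)e_1\otimes_{e_1\B_n^r(\bdel)e_1}e_1\B_n^r(\bdel)\rightarrow J_1$ is an isomorphism and $\B_n^r(\bdel)e_1$ is projective over $e_1\B_n^r(\bdel)e_1$, with the higher Tor terms vanishing; only then does the recollement bound $\mathrm{gl.dim}\,\B_n^r(\bdel)$ by the global dimensions of $\h_n^r$ and of $e_1\B_n^r(\bdel)e_1\cong\B_{n-2}^r(\bdel)$. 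This verification is where the invertibility of $\bdel$ and the condition $p>n$, $p\nmid r$ actually enter, and it is exactly the content of the quasi-heredity theorem you are trying to bypass; so either carry it out in detail (essentially reproving [\cite{BCV}, Theorem 3.3.1]) or cite that result together with [\cite{KX2}, Theorem 1.1], as the paper does.
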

\begin{proof}
Assume that $\B_n^r(\bdel)$ has finite global dimension. It follows that $\h_n^r$ also has finite global dimension because $\mathrm{gl.dim ~} \B_n^{r}(\bdel) \geq \mathrm{gl.dim ~}\h_n^r$ by [\cite{DK}, Corollary 1.6] and Corollary \ref{exact split pair cyclotomic brauer algebra for split pair}. This implies that $p>n$ and $p \nmid r$, or $p=0$. Conversely, if $p>n$ and $p \nmid r$ or $p=0$, then $\B_n^r(\bdel)$ is quasi-hereditary by [\cite{BCV}, Theorem 3.3.1], and hence has finite global dimension by [\cite{KX2}, Theorem 1.1]. 
\end{proof}
Note that in the case of $\bdel=0$ and $n$ is odd, the statements in Corollary \ref{exact split pair cyclotomic brauer algebra for split pair} and Proposition \ref{global dimension of cyclotomic Brauer algebra} are still valid.
\begin{remark} 
The Brauer algebra of type $C$, denoted by $\B(C_n,\delta)$, is an associative $R$-algebra over a field $R$ with $\delta$ a non-zero element in $R$. It was first introduced in \cite{CLY}, and can be written as an iterated inflation of group algebras of hyper-octahedral groups, for more details see \cite{Bo}. The hyper-octahedral group is a special case of the generalized symmetric group when we take $r=2$. The diagram in $\B(C_n,\delta)$ can be viewed as ``unfolding" the diagram vertically along the wall. Similarly one can prove that $R[\Sg_2 \wr \Sg_{n-l}]$ is a corner split quotient of $\B(C_n,\delta)$, which gives an exact split pair $(\ind_l,\Res_l)$ of functors between the categories $\textbf{\text{mod}-}R[\Sg_2 \wr \Sg_{n-l}]$ and $\textbf{\text{mod}-}\B(C_n,\delta)$, where $0 \leq l \leq n$. Note that $\B(C_n,\delta)$ also has finite global dimension if and only if $\delta \neq 0 $ and $p>n$.
\end{remark}

\section{The Walled Brauer algebra}\label{Walled Brauer algebras}


Let $R$ be a commutative ring with $1$, and $\delta$ a distinguished element in $R$. Fix two positive integers $r$ and $t$. The walled Brauer algebra $\B_{r,t}(\delta)$ is a subalgebra of the ordinary Brauer algebra $\B_{r+t}(\delta)$, and if either $r$ or $t$ is $0$, then $\B_{r,t}(\delta)$ coincides with the ordinary Brauer algebra. An \textit{$(r,t)$-walled Brauer diagram} is an ordinary Brauer diagram with $r+t$ vertices arranged in two rows, separated by a wall between the $r$th and $(r+1)$th vertices. All horizontal edges cross the wall, but vertical edges do not. The walled Brauer algebra $\B_{r,t}(\delta)$ is an associative $R$-algebra spanned by all $(r,t)$-walled Brauer diagrams, with multiplication similar to that of ordinary Brauer algebras. It contains the group algebra of the direct product of two symmetric groups $\Sg_r$ and $\Sg_t$ as a subalgebra and is denoted by $R\Sg_{r,t}$. The generators are $s_i$, $s_{r+j}$, and $e_{k,l}$, where $1 \leq i \leq r-1$, $1 \leq j \leq t-1$, and $1 \leq k \leq r$, $r+1 \leq l \leq r+t$, see Figure \ref{generators of walled Brauer algebra}.

\begin{figure}[H]
\begin{center}
\begin{tikzpicture}[x=0.75cm,y=0.75cm]
\draw[-] (0,0)-- (0,-1);
\draw[-] (1,0)-- (1,-1);
\draw[-] (2,0)-- (3,-1);
\draw[-] (3,0)-- (2,-1);
\draw[-] (4,0)-- (4,-1);
\draw[-] (4.5,0.5)-- (4.5,-1.5);
\draw[-] (5,0)-- (5,-1);
\draw[-] (6,0)-- (6,-1);
\node at (-1,-0.5) {$s_i=$};
\node at (0,0.5) {\tiny $1$};
\node at (1,0.5) { \tiny $2$};
\node at (1.5,-0.5) {\tiny  $\cdots$};
\node at (2,0.5) {\tiny  $i$};
\node at (3,0.5) { \tiny $i+1$};
\node at (4,0.5) {\tiny  $r$};
\node at (5,0.5) { \tiny $r+1$};
\node at (5.5,-0.5) {\tiny  $\cdots$};
\node at (6,0.5) { \tiny $r+t$};
\end{tikzpicture}
\begin{tikzpicture}[x=0.75cm,y=0.75cm]
\draw[-] (0,0)-- (0,-1);
\draw[-] (1,0)-- (1,-1);
\draw[-] (1.5,0.5)-- (1.5,-1.5);
\draw[-] (2,0)-- (2,-1);
\draw[-] (3,0)-- (4.5,-1);
\draw[-] (4.5,0)-- (3,-1);
\draw[-] (6.5,0)-- (6.5,-1);
\node at (-1,-0.5) {$s_{r+j}=$};
\node at (0,0.5) {\tiny $1$};
\node at (0.5,-0.5) {\tiny $\cdots$};
\node at (1,0.5) {\tiny $r$};
\node at (2,0.5) {\tiny $r+1$};
\node at (2.5,-0.5) {\tiny $\cdots$};
\node at (3,0.5) {\tiny $r+j$};
\node at (4.5,0.5) {\tiny $r+j+1$};
\node at (6,-0.5) {\tiny $\cdots$};
\node at (6.5,0.5) {\tiny $r+t$};
\end{tikzpicture}
\begin{tikzpicture}[x=0.75cm,y=0.75cm]
\draw[-] (0,0)-- (0,-1);
\draw[-] (1,0)-- (1,-1);
\draw[-] (2,0).. controls(4,-0.5)..(6,0);
\draw[-] (3,0)-- (3,-1);
\draw[-] (5,0)-- (5,-1);
\draw[-] (4,0.5)-- (4,-1.5);
\draw[-] (7,0)-- (7,-1);
\draw[-] (2,-1).. controls(4,-0.5)..(6,-1);
\draw[-] (7,0)-- (7,-1);
\node at (-1,-0.5) {$e_{k,l}=$};
\node at (0,0.5) {\tiny $1$};
\node at (1,0.5) {\tiny $2$};
\node at (2,0.5) {\tiny $k$};
\node at (1.5,-0.5) {\tiny $\cdots$};
\node at (3,0.5) {\tiny $r$};
\node at (5,0.5) {\tiny $r+1$};
\node at (6.5,-0.5) {\tiny $\cdots$};
\node at (6,0.5) {\tiny $l$};
\node at (7,0.5) {\tiny $r+t$};
\end{tikzpicture}
\caption{Generators of the walled Brauer algebra.}\label{generators of walled Brauer algebra}
\end{center}
\end{figure}
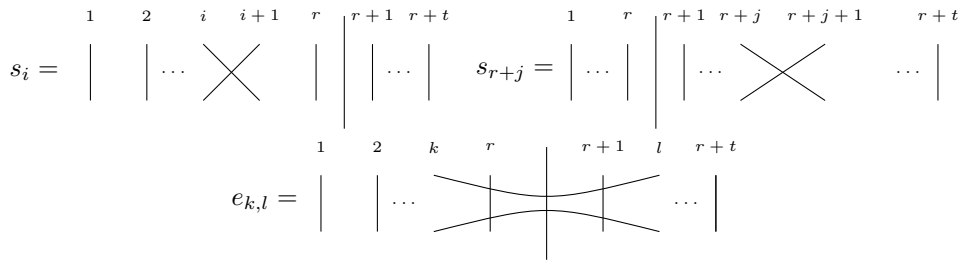

Let $s = \min\{r, t\}$ and $0 \leq l \leq s$. When $\delta $ is invertible in $R$, the idempotent is defined as shown in Figure \ref{Idempotent when delta non-zero}. However, when $\delta = 0$ and either $r$ or $t$ is at least 2, the idempotent is defined as in Figure \ref{Idempotent when delta 0}.

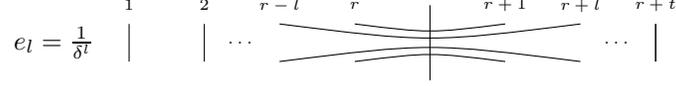
\begin{figure}[H]
\begin{tikzpicture}[x=1cm,y=0.5cm]
\draw[-] (0,0)-- (0,-1);
\draw[-] (1,0)-- (1,-1);
\draw[-] (2,0).. controls(4,-0.5)..(6,0);
\draw[-] (3,0).. controls(4,-0.25)..(5,0);
\draw[-] (4,0.5)-- (4,-1.5);
\draw[-] (7,0)-- (7,-1);
\draw[-] (2,-1).. controls(4,-0.5)..(6,-1);
\draw[-] (3,-1).. controls(4,-0.75)..(5,-1);
\draw[-] (7,0)-- (7,-1);
\node at (-1,-0.5) {$e_l=\frac{1}{\delta^{l}}$};
\node at (0,0.5) {\tiny $1$};
\node at (1,0.5) {\tiny $2$};
\node at (2,0.5) {\tiny $r-l$};
\node at (1.5,-0.5) {\tiny $\cdots$};
\node at (3,0.5) {\tiny $r$};
\node at (5,0.5) {\tiny $r+1$};
\node at (6.5,-0.5) {\tiny $\cdots$};
\node at (6,0.5) {\tiny $r+l$};
\node at (7,0.5) {\tiny $r+t$};
\end{tikzpicture}
\caption{Idempotent of $\B_{r,t}(\delta)$ for invertible $\delta$.}\label{Idempotent when delta non-zero}
\end{figure}
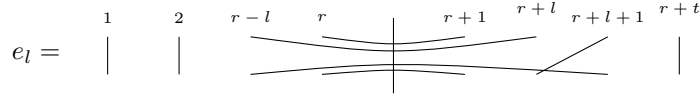
\begin{figure}[H]
\begin{tikzpicture}[x=0.95cm,y=0.5cm]
\draw[-] (0,0)-- (0,-1);
\draw[-] (1,0)-- (1,-1);
\draw[-] (2,0).. controls(4,-0.5)..(6,0);
\draw[-] (2,-1).. controls(4,-0.65)..(7,-1);
\draw[-] (3,0).. controls(4,-0.25)..(5,0);
\draw[-] (4,0.5)-- (4,-1.5);
\draw[-] (7,0)-- (6,-1);
\draw[-] (8,0)-- (8,-1);
\draw[-] (3,-1).. controls(4,-0.85)..(5,-1);
\node at (-1,-0.5) {$e_l=$};
\node at (0,0.5) {\tiny $1$};
\node at (1,0.5) {\tiny $2$};
\node at (2,0.5) {\tiny $r-l$};
\node at (3,0.5) {\tiny $r$};
\node at (5,0.5) {\tiny $r+1$};
\node at (6,0.75) {\tiny $r+l$};
\node at (7,0.5) {\tiny $r+l+1$};
\node at (8,0.75) {\tiny $r+t$};
\end{tikzpicture}
\caption{Idempotent of $\B_{r,t}(\delta)$ when $\delta= 0$, and one of $r$ or $t$ $\geq 2$.}\label{Idempotent when delta 0}
\end{figure}

\subsection{Cellular structure}
Let $s:=\min\{r,t\}$, and $0 \leq l \leq s$. Assume that $R$ is a commutative Noetherian domain with invertible $\delta$. Let $V_l$ be the free $R$-module generated by all $(r,t)$-partial diagrams with $l$ horizontal edges, $r-l$ free vertices on the left side, and $t-l$ free vertices on the right side of the wall. The involution $i$ is defined by flipping the diagram along the horizontal axis. By [\cite{CVDM}, Proposition 2.6], the algebra $\B_{r,t}(\delta)$ can be expressed as an iterated inflation of $R\mathfrak{S}_{r-l,t-l}$ along $V_l$: $$ \mathcal{B}_{r,t}(\delta) =\bigoplus_{l=0}^{s} V_{l}\otimes V_{l} \otimes R\mathfrak{S}_{r-l,t-l}.$$ The group algebra $R\Sg_{r-l,t-l}$ being a direct product of two cellular algebras is cellular by [\cite{GM}, Example 3.1.6]. Therefore, by [\cite{KX3}, Proposition 3.5], $\B_{r,t}(\delta)$ is also cellular.



\subsection{ Exact split pair ($\B_{r,t}(\delta), R\Sg_{r,t}$)}\label{Exact split pair situation in the case of walled Brauer algebras}
We now show the existence of an exact split pair for $\B_{r,t}(\delta)$. This will allow us to determine the criteria for the non-vanishing of $\Hom$ and $\Ext$ spaces between the cell modules of $\B_{r,t}(\delta)$.

Let $0 \leq l \leq s$. Define the induction functor $\ind_l$, and restriction functor $\Res_l$ as follows: $\ind_l(M)=  M \otimes_{e_l \B_{r,t}(\delta)e_l} e_l\B_{r,t}(\delta)$ for $M \in \text{\textbf{mod}-} R\Sg_{r-l,t-l}$ and $\Res_l(N)=N\otimes_{\B_{r,t}(\delta)} \B_{r,t}(\delta)e_l $ for $N \in \text{\textbf{mod}-} \B_{r,t}(\delta)$.

\begin{theorem}\label{corner split quotient of walled Brauer algebras}
If $\delta$ is invertible, then for each $l \in \{1, \cdots, s\}$, there exists an exact split pair of functors $(\ind_l, \Res_l)$ between the categories $\textbf{\text{mod}-}R\Sg_{r-l,t-l}$, and $\textbf{\text{mod}-}\B_{r,t}(\delta)$.
\end{theorem}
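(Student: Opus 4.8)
The plan is to mirror the proof structure already established for the $A$-Brauer algebra in Section~\ref{$A$-Brauer algebra}, since the walled Brauer algebra fits the same ``iterated inflation of a group algebra subalgebra'' paradigm. The strategy is: (1) show $R\Sg_{r,t}$ is a split quotient of $\B_{r,t}(\delta)$; (2) identify the corner ring $e_l\B_{r,t}(\delta)e_l$ with the smaller walled Brauer algebra $\B_{r-l,t-l}(\delta)$; (3) deduce that $R\Sg_{r-l,t-l}$ is a corner split quotient of $\B_{r,t}(\delta)$; (4) invoke Theorem~\ref{general corner split quotient} to conclude the pair $(\ind_l,\Res_l)$ is an exact split pair. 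Steps (1)--(3) are the walled-Brauer analogues of Proposition~\ref{split quotient for A-Brauer algebra}, Proposition~\ref{corner rings for A Brauer algebra}, and Theorem~\ref{corner split quotient for A Brauer algebra}, and step (4) is immediate once (3) is in place.

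First I would record that $R\Sg_{r,t}$ sits inside $\B_{r,t}(\delta)$ as the span of diagrams with no horizontal edges, and that the map $\pi\colon \B_{r,t}(\delta)\twoheadrightarrow R\Sg_{r,t}$ killing every diagram with at least one horizontal edge is a surjective algebra homomorphism with kernel $J_1$ (the span of walled diagrams with $\geq 1$ horizontal edge, which is a two-sided ideal by the same max-of-propagation-numbers argument as in Lemma~\ref{Jl two sided ideal of A-Brauer algebras}); composing with the canonical inclusion $\epsilon$ gives $\pi\circ\epsilon=1$, so $R\Sg_{r,t}$ is a split quotient. Applying this to the parameters $(r-l,t-l)$ shows $R\Sg_{r-l,t-l}$ is a split quotient of $\B_{r-l,t-l}(\delta)$. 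Next, using the idempotent $e_l$ of Figure~\ref{Idempotent when delta non-zero}, I would check that $e_l\B_{r,t}(\delta)e_l$ has a basis of walled diagrams whose top and bottom configurations both contain the fixed $l$ horizontal arcs of $e_l$, and the map $d\mapsto e_l d e_l$ (identifying $d\in\B_{r-l,t-l}(\delta)$ with a diagram on the remaining $r-l$ left and $t-l$ right vertices, padded with through-strands on the $2l$ occupied vertices) is an algebra isomorphism $\B_{r-l,t-l}(\delta)\xrightarrow{\sim}e_l\B_{r,t}(\delta)e_l$; invertibility of $\delta$ is used exactly as in Proposition~\ref{corner rings for A Brauer algebra} to absorb closed loops. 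Combining, $R\Sg_{r-l,t-l}$ is a split quotient of the corner ring $e_l\B_{r,t}(\delta)e_l$, and the final bimodule bookkeeping (exhibiting the $e_l\B_{r,t}(\delta)e_l$-$\B_{r,t}(\delta)$-bimodule $S_l = R\Sg_{r-l,t-l}\otimes_{e_l\B_{r,t}(\delta)e_l} e_l\B_{r,t}(\delta)$, checking it is free as a left $R\Sg_{r-l,t-l}$-module of rank $=\operatorname{rank}V_l$, and that $S_l e_l\cong R\Sg_{r-l,t-l}$) is verbatim the argument of Theorem~\ref{corner split quotient for A Brauer algebra}, with the layer ideal $J_{l+1}$ of diagrams with $\geq l+1$ horizontal edges playing the role it did there. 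With this, Theorem~\ref{general corner split quotient} yields $F=-\otimes_{\B_{r,t}(\delta)}\B_{r,t}(\delta)e_l = \Res_l$ and $G=-\otimes_{R\Sg_{r-l,t-l}}R\Sg_{r-l,t-l}\otimes_{e_l\B_{r,t}(\delta)e_l}S_l = \ind_l$ forming an exact split pair, which is the claim.

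The one place where some genuine care is needed — rather than a routine translation — is the arrangement of the wall in the padded identification $\B_{r-l,t-l}(\delta)\hookrightarrow e_l\B_{r,t}(\delta)e_l$: the idempotent $e_l$ of Figure~\ref{Idempotent when delta non-zero} attaches its $l$ horizontal arcs to the $l$ left-of-wall vertices $r-l+1,\dots,r$ and the $l$ right-of-wall vertices $r+1,\dots,r+l$, so the ``free'' part of the corner is carried by vertices $1,\dots,r-l$ on the left and $r+l+1,\dots,r+t$ on the right, and one must verify that multiplication of such padded diagrams never produces a horizontal edge failing to cross the wall — this is automatic because the arcs of $e_l$ and the through-strands respect the wall, but it should be stated. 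I expect this verification, together with the freeness count $S_l\cong (R\Sg_{r-l,t-l})^{\oplus n_l}$ with $n_l=\operatorname{rank}_R V_l$, to be the substantive content; everything else follows formally from the results already proved in Sections~\ref{Preliminaries} and~\ref{$A$-Brauer algebra}. Finally I would remark, as in the $A$-Brauer and cyclotomic cases, that the argument also goes through when $\delta=0$ and one of $r,t$ is $\geq 2$, using the idempotent of Figure~\ref{Idempotent when delta 0}.
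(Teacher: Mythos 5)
Your proposal is correct and follows essentially the same route as the paper: the paper's proof of Theorem~\ref{corner split quotient of walled Brauer algebras} simply invokes the argument of Corollary~\ref{exact split pair cyclotomic brauer algebra for split pair}, i.e.\ the corner-split-quotient construction of Theorem~\ref{corner split quotient for A Brauer algebra} transplanted to $\B_{r,t}(\delta)$ and $R\Sg_{r-l,t-l}$, followed by Theorem~\ref{general corner split quotient}. Your write-up merely makes explicit the details (split quotient via $J_1$, corner ring $e_l\B_{r,t}(\delta)e_l\cong\B_{r-l,t-l}(\delta)$, freeness of $S_l$, compatibility with the wall) that the paper leaves to the analogy.
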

\begin{proof}
The proof uses a similar argument as in Corollary \ref{exact split pair cyclotomic brauer algebra for split pair}. 
\end{proof}
Next, we will show some applications of these exact split pairs. The poset $\Lambda$ is the set of all tuples $(l, (\lambda, \mu))$ where $0 \leq l \leq s$ and $\lambda, \mu$ are partitions of $r-l$ and $t-l$, respectively. Let $(l,(\lambda, \mu)), (m,(\lambda', \mu')) \in \Lambda$, the partial order ``$\leq $" on $\Lambda$ is defined as follows: $$(l,(\lambda,\mu)) \leq (m,(\lambda',\mu')) \text{ if } m < l, \text{ or } m=l \text{ and } \lambda \unrhd \lambda',  \mu \unrhd \mu'.$$ Therefore, by using the induction functor, we write the cell modules of $\B_{r,t}(\delta)$ as $\ind_l S^{\lambda,\mu}$ where $(l,(\lambda,\mu)) \in \Lambda$ and $S^{\lambda,\mu}$ is the Specht module of $R\Sg_{r-l,t-l}$. From now on, we assume $R$ to be a field. 


\begin{corollary} \label{Hom between cell module for walled Brauer for split pair}
If $ \mathrm{char} ~R \neq 2$, then $\Hom_{\B_{r,t}(\delta)}( \ind_l S^{\lambda,\mu}, \ind_{l} S^{\lambda',\mu'}) \neq 0$ implies that $\lambda \triangleright \lambda'$, and $\mu \triangleright \mu'$. 
\end{corollary}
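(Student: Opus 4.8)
The plan is to reduce the statement about $\Hom$ between cell modules of $\B_{r,t}(\delta)$ to a statement about the input algebra $R\Sg_{r-l,t-l}$, exploiting the exact split pair $(\ind_l,\Res_l)$ from Theorem \ref{corner split quotient of walled Brauer algebras}. The key structural fact I would use is that $\ind_l$, being part of a split pair (so $\Res_l \circ \ind_l$ is an auto-equivalence, indeed the identity up to natural isomorphism on $\textbf{\text{mod}-}R\Sg_{r-l,t-l}$), is faithful and in particular reflects nonzero homomorphisms: if $f \colon \ind_l S^{\lambda,\mu} \to \ind_l S^{\lambda',\mu'}$ is nonzero, then applying $\Res_l$ and using $\Res_l\ind_l \cong \mathrm{id}$ produces a nonzero map $S^{\lambda,\mu} \to S^{\lambda',\mu'}$ of $R\Sg_{r-l,t-l}$-modules. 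So the first step is: establish that a nonzero $\Hom$ in the top algebra forces a nonzero $\Hom$ between the corresponding Specht modules of $R\Sg_{r-l,t-l}$.

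The second step is purely a statement about the symmetric-group side. Since $R\Sg_{r-l,t-l} = R\Sg_{r-l}\otimes_R R\Sg_{t-l}$ and $S^{\lambda,\mu} = S^\lambda \boxtimes S^\mu$, we have $\Hom_{R\Sg_{r-l,t-l}}(S^\lambda\boxtimes S^\mu, S^{\lambda'}\boxtimes S^{\mu'}) \cong \Hom_{R\Sg_{r-l}}(S^\lambda,S^{\lambda'})\otimes_R \Hom_{R\Sg_{t-l}}(S^\mu,S^{\mu'})$, so nonvanishing of the left side forces nonvanishing of each tensor factor. Now I invoke the classical Specht-module homomorphism result (Carter–Lusztig / James, as in James's book, Corollary 13.17): over a field $R$ with $\mathrm{char}\,R \neq 2$, if $\Hom_{R\Sg_m}(S^\lambda, S^\nu) \neq 0$ then $\lambda \trianglerighteq \nu$ in the dominance order (and when $\mathrm{char}\,R=0$ one gets $\lambda=\nu$; the characteristic-$2$ exclusion is exactly what makes the dominance comparison valid). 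Applying this to both factors gives $\lambda \trianglerighteq \lambda'$ and $\mu \trianglerighteq \mu'$.

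To finish I need to upgrade $\trianglerighteq$ to the strict $\triangleright$ claimed in the statement. Here I would argue that equality of partitions in one factor cannot occur unless it occurs in both, and then rule out the diagonal case: if, say, $(\lambda,\mu) = (\lambda',\mu')$ the two cell modules are literally the same, and one must check whether $\Hom(\ind_l S^{\lambda,\mu}, \ind_l S^{\lambda,\mu})$ is allowed to be nonzero — it is, of course, so strictly speaking the honest conclusion is $\lambda\trianglerighteq\lambda'$ and $\mu\trianglerighteq\mu'$; the paper evidently intends $\triangleright$ to mean $\trianglerighteq$, or is implicitly excluding the trivial automorphism case, and I would phrase the write-up accordingly (matching the convention used elsewhere in the paper, e.g. the cyclotomic-Brauer analogue). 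The main obstacle is not any of the module theory but making sure the identification $\Res_l\ind_l \cong \mathrm{id}$ from the split-pair formalism is applied correctly at the level of $R\Sg_{r-l,t-l}$-module maps rather than merely on objects; once that faithfulness/reflecting property is pinned down, the rest is the standard Specht-module dominance bound applied twice.
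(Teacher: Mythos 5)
Your overall strategy is the same as the paper's: reduce the question to $R\Sg_{r-l,t-l}$ through the exact split pair of Theorem \ref{corner split quotient of walled Brauer algebras}, factor the $\Hom$-space over the tensor product $R\Sg_{r-l}\otimes_R R\Sg_{t-l}$ (the paper quotes Xi's Lemma 3.2 for this), and apply James's Corollary 13.17 to each factor, with the characteristic-$2$ exclusion entering exactly there. Your observation that the honest conclusion is $\lambda\unrhd\lambda'$, $\mu\unrhd\mu'$ rather than the strict relation also matches what the paper's own contrapositive argument actually establishes.

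However, your reduction step has a genuine gap. From $\Res_l\circ\ind_l\cong\mathrm{id}$ one only gets that $\ind_l$ is faithful, i.e.\ an injection $\Hom_{R\Sg_{r-l,t-l}}(S^{\lambda,\mu},S^{\lambda',\mu'})\hookrightarrow\Hom_{\B_{r,t}(\delta)}(\ind_l S^{\lambda,\mu},\ind_l S^{\lambda',\mu'})$ --- the wrong direction for this corollary. A nonzero $f\colon\ind_l M\to\ind_l M'$ need not lie in the image of $\ind_l$, and the natural isomorphism $\Res_l\circ\ind_l\cong\mathrm{id}$ alone does not force $\Res_l(f)\neq 0$: in the general split-pair formalism the $\Hom$ (and $\Ext$) groups over the small algebra are only direct summands of those over the big one, so nonvanishing upstairs does not formally propagate downstairs. (For instance, with $B=k$, $A=k[x]/(x^2)$, $G=-\otimes_k A$ and $F=-\otimes_A A/(x)$ one has $F\circ G\cong\mathrm{id}$, yet multiplication by $x$ is a nonzero endomorphism of $G(k)=A$ killed by $F$; so ``faithful, hence reflects nonzero homomorphisms between induced objects'' is not a valid inference.) What makes the step correct here is the stronger fact the paper cites from [DK, Proposition 1.3], namely the isomorphism $\Hom_{\B_{r,t}(\delta)}(\ind_l S^{\lambda,\mu},\ind_l S^{\lambda',\mu'})\cong\Hom_{R\Sg_{r-l,t-l}}(S^{\lambda,\mu},S^{\lambda',\mu'})$; concretely it follows from the adjunction $\Hom_{\B_{r,t}(\delta)}\big(M\otimes_{e_l\B_{r,t}(\delta)e_l}e_l\B_{r,t}(\delta),\,N\big)\cong\Hom_{e_l\B_{r,t}(\delta)e_l}(M,Ne_l)$ together with $(\ind_l M')e_l\cong M'$, or equivalently from the observation that $\ind_l M$ is generated as a right $\B_{r,t}(\delta)$-module by $M\otimes e_l$, so a map vanishing after $\Res_l$ vanishes. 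Once that isomorphism is in place, the remainder of your argument coincides with the paper's proof.
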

\begin{proof}
We have the following isomorphism using [\cite{DK}, Proposition 1.3], and Theorem \ref{corner split quotient of walled Brauer algebras}
\begin{align*}
\Hom_{\B_{r,t}(\delta)}( \ind_l S^{\lambda,\mu}, \ind_{l} S^{\lambda',\mu'}) &\cong \Hom_{R\Sg_{r-l,t-l}}( S^{\lambda,\mu},  S^{\lambda',\mu'})\\
& \cong \Hom_{R\Sg_{r-l}} (S^{\lambda}, S^{\lambda'}) \boxtimes_R \Hom_{R\Sg_{t-l}} (S^{\mu}, S^{\mu'}),
\end{align*}
and the last isomorphism follows from [\cite{Xi}, Lemma 3.2]. If $\lambda \ntrianglerighteq \lambda'$ or $ \mu \ntrianglerighteq \mu'$, then the last isomorphism is zero by [\cite{JaB}, Corollary 13.17], provided $\mathrm{char}~R\neq 2$.
 Hence, $\Hom_{\B_{r,t}(\delta)}( \ind_l S^{\lambda,\mu}, \ind_{l} S^{\lambda',\mu'}) \neq 0$ implies $\lambda \triangleright \lambda'$ and $\mu \triangleright \mu'$. 
\end{proof}

\begin{corollary}\label{Ext between cell module for walled for split pair}
    If $ \mathrm{char} ~R \neq 2,3$, then $\Ext_{\B_{r,t}(\delta)}^1( \ind_l S^{\lambda,\mu}, \ind_{l} S^{\lambda',\mu'}) \neq 0$ implies $\lambda \trianglerighteq \lambda'$ and $\mu \unrhd \mu'$. 
\end{corollary}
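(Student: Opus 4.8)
The approach mirrors the proof of Corollary \ref{Hom between cell module for walled Brauer for split pair}, but with $\Ext^1$ in place of $\Hom$. First I would invoke the exact split pair $(\ind_l, \Res_l)$ of Theorem \ref{corner split quotient of walled Brauer algebras} together with the cohomological comparison result [\cite{DK}, Proposition 1.3] (or its $\Ext$-analogue in \cite{DK}) to get an isomorphism
\[
\Ext^1_{\B_{r,t}(\delta)}(\ind_l S^{\lambda,\mu}, \ind_l S^{\lambda',\mu'}) \cong \Ext^1_{R\Sg_{r-l,t-l}}(S^{\lambda,\mu}, S^{\lambda',\mu'}).
\]
Since the functors forming an exact split pair induce, in particular, injections on $\Ext$-groups (this is exactly the content of the cohomological comparison in \cite{DK}: $F$ and $G$ being exact with $F\circ G$ an auto-equivalence forces $\Ext$ of the source to embed into $\Ext$ of the target), the nonvanishing of the left-hand side forces the nonvanishing of $\Ext^1_{R\Sg_{r-l,t-l}}(S^{\lambda,\mu}, S^{\lambda',\mu'})$.

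Next I would decompose the group algebra $R\Sg_{r-l,t-l} = R\Sg_{r-l} \otimes_R R\Sg_{t-l}$ as a tensor product and use a Künneth-type formula for $\Ext$ over a tensor product of finite-dimensional algebras over a field. Concretely, for modules $S^\lambda \boxtimes S^\mu$ and $S^{\lambda'} \boxtimes S^{\mu'}$ one has
\[
\Ext^1_{R\Sg_{r-l,t-l}}(S^\lambda \boxtimes S^\mu, S^{\lambda'} \boxtimes S^{\mu'}) \cong \big(\Ext^1_{R\Sg_{r-l}}(S^\lambda, S^{\lambda'}) \otimes_R \Hom_{R\Sg_{t-l}}(S^\mu, S^{\mu'})\big) \oplus \big(\Hom_{R\Sg_{r-l}}(S^\lambda, S^{\lambda'}) \otimes_R \Ext^1_{R\Sg_{t-l}}(S^\mu, S^{\mu'})\big),
\]
which is the degree-one part of the Künneth formula (the higher $\Tor$ terms vanish since $R$ is a field). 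So if the left-hand side is nonzero, then either $\Ext^1_{R\Sg_{r-l}}(S^\lambda, S^{\lambda'}) \neq 0$ and $\Hom_{R\Sg_{t-l}}(S^\mu, S^{\mu'}) \neq 0$, or $\Hom_{R\Sg_{r-l}}(S^\lambda, S^{\lambda'}) \neq 0$ and $\Ext^1_{R\Sg_{t-l}}(S^\mu, S^{\mu'}) \neq 0$.

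Finally I would feed each of these into the known combinatorial restrictions for Specht modules of symmetric groups. For the $\Hom$-factor, [\cite{JaB}, Corollary 13.17] gives (in characteristic $\neq 2$) that $\Hom_{R\Sg_m}(S^\alpha, S^{\alpha'}) \neq 0$ implies $\alpha \trianglerighteq \alpha'$; for the $\Ext^1$-factor, the analogous result on $\Ext^1$ between Specht modules — valid in characteristic $\neq 2,3$ (this is where the $p\neq 3$ hypothesis enters, e.g. via results on $\Ext^1$ of Specht modules such as those in the literature built on James's work) — gives $\alpha \trianglerighteq \alpha'$ as well. In either of the two cases above we therefore conclude $\lambda \trianglerighteq \lambda'$ and $\mu \trianglerighteq \mu'$, as claimed. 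The main obstacle I anticipate is pinning down the precise citation and hypotheses for the $\Ext^1$-vanishing statement for Specht modules (ensuring it genuinely requires only $\mathrm{char}\,R \neq 2,3$ and gives the non-strict dominance $\trianglerighteq$ rather than strict $\triangleright$), and making sure the Künneth decomposition is applied with the modules in the correct variance so that the dominance inequalities come out in the stated direction; the exact-split-pair input and the tensor-product decomposition are routine given the earlier results.
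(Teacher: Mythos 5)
Your proposal is correct and follows essentially the same route as the paper: the paper transfers $\Ext^1$ across the exact split pair via [\cite{DK}, Proposition 1.6] and Theorem \ref{corner split quotient of walled Brauer algebras}, decomposes over $R\Sg_{r-l,t-l}$ by the K\"unneth formula [\cite{CE}, Chapter XI, Theorem 3.1], and then applies [\cite{JaB}, Corollary 13.17] for the $\Hom$ factors and [\cite{HN}, Theorem 4.2.1] for the $\Ext^1$ factors, the latter being exactly the citation you flagged as the remaining gap. So your plan coincides with the paper's proof in both structure and hypotheses.
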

\begin{proof}
We have the following isomorphism from [\cite{DK}, Proposition 1.6], and Theorem \ref{corner split quotient of walled Brauer algebras}
\begin{align*}
&\Ext_{\B_{r,t}(\delta)}^1( \ind_l S^{\lambda,\mu}, \ind_{l} S^{\lambda',\mu'}) \cong \Ext_{R\Sg_{r-l,t-l}}^1( S^{\lambda,\mu},  S^{\lambda',\mu'})\\
& \cong \big(\Ext_{R\Sg_{r-l}}^1 (S^{\lambda}, S^{\lambda'}) \boxtimes_R \Hom_{R\Sg_{t-l}} (S^{\mu}, S^{\mu'})\big) \bigoplus \big(\Hom_{R\Sg_{r-l}} (S^{\lambda}, S^{\lambda'}) \boxtimes_R \Ext_{R\Sg_{t-l}}^1 (S^{\mu}, S^{\mu'})\big),
\end{align*}
and the last isomorphism follows from [\cite{CE}, Chapter XI, Theorem 3.1]. This first direct summand is zero whenever $\lambda \ntriangleright \lambda'$ or $\mu \ntrianglerighteq \mu'$ by [\cite{HN}, Theorem 4.2.1] and 
[\cite{JaB}, Corollary 13.17], provided $\mathrm{char}~R\neq 2,3$. Similarly, if $\mathrm{char}~R\neq 2,3$ the other direct summand is zero whenever $\lambda \ntrianglerighteq \lambda'$ or $\mu \ntriangleright \mu'$.  
Hence, the proof follows.
\end{proof}

\begin{remark}
    We can determine the criteria for non-vanishing cohomology in the case of $\B_{r,t}(\delta)$ because the existence of an exact split pair ensures that the $\Ext$ between two cell modules of $\B_{r,t}(\delta)$ is isomorphic to the $\Ext$ between two cell modules of $R\Sg_{r-l,t-l}$. While a similar isomorphism holds for other algebras, the  criteria for non-vanishing cohomology for the case of wreath product algebras $A \wr \Sg_n$ remain unknown, which is needed to determine the non-vanishing cohomology for $D_n(A)$, and $\B_n^r(\bdel)$.
\end{remark}
Let $\mathrm{char}~R=p$. If $p \leq r $ (resp. $p \leq t$) then $R\Sg_r$ (resp. $R\Sg_t$) has infinite global dimension. This implies that $R\Sg_{r,t}$ has infinite global dimension if $p \leq \min \{r,t\}$.

\begin{corollary}\label{global dimension for walled brauer for split pair}
 $\B_{r,t}(\delta)$ has finite global dimension if and only if $p > \max\{r,t\}$, or $p=0$.
\end{corollary}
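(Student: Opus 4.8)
The plan is to argue both implications via the exact split pair $(\ind_l, \Res_l)$ of Theorem \ref{corner split quotient of walled Brauer algebras} together with the quasi-hereditary structure of $\B_{r,t}(\delta)$. For the forward direction, suppose $\B_{r,t}(\delta)$ has finite global dimension. Taking $l = 0$ in Theorem \ref{corner split quotient of walled Brauer algebras} gives an exact split pair between $\textbf{\text{mod}-}R\Sg_{r,t}$ and $\textbf{\text{mod}-}\B_{r,t}(\delta)$, so by [\cite{DK}, Corollary 1.6] we get $\mathrm{gl.dim~}\B_{r,t}(\delta) \geq \mathrm{gl.dim~}R\Sg_{r,t}$; hence $R\Sg_{r,t}$ also has finite global dimension. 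By the observation preceding the corollary, $R\Sg_{r,t}$ has infinite global dimension whenever $p \leq \min\{r,t\}$, and since $R\Sg_{r,t} \cong R\Sg_r \otimes_R R\Sg_t$, the additivity of global dimension for tensor products of finite-dimensional algebras over a field forces $\mathrm{gl.dim~}R\Sg_r < \infty$ and $\mathrm{gl.dim~}R\Sg_t < \infty$. By Maschke's theorem (and its converse), this happens precisely when $p > r$ and $p > t$, or $p = 0$; i.e. $p > \max\{r,t\}$ or $p = 0$.

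For the converse, assume $p > \max\{r,t\}$ or $p = 0$. The strategy is to invoke [\cite{KX2}, Theorem 1.1], which says a quasi-hereditary algebra has finite global dimension, exactly as in the proof of Proposition \ref{global dimension of cyclotomic Brauer algebra}. So the key step is to show $\B_{r,t}(\delta)$ is quasi-hereditary under this hypothesis. This should follow from the cellular (iterated inflation) structure recalled in Section \ref{Cellular structure}: a cellular algebra built by iterated inflation is quasi-hereditary once each inflation layer is — here the layers are built from $R\Sg_{r-l,t-l}$, which is semisimple when $p > \max\{r,t\}$ or $p=0$ — and the bilinear forms $\phi$ on the layers are nondegenerate (which, for walled Brauer algebras with $\delta$ invertible, is known). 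I would cite the quasi-heredity of $\B_{r,t}(\delta)$ from the literature (e.g. \cite{CVDM} or \cite{CE}, wherever it is established for invertible $\delta$ and semisimple $R\Sg_{r,t}$) rather than reprove it.

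The main obstacle is pinning down the precise statement and citation for quasi-heredity of $\B_{r,t}(\delta)$ in the relevant characteristic range, and making sure the hypothesis ``$\delta$ invertible'' (already standing in this subsection) is what is needed — the parameter $\delta$ does not enter the final numerical criterion, which only involves $p$, $r$, $t$, so one must be careful that the quasi-heredity result used does not impose extra conditions on $\delta$ beyond invertibility. A secondary point to handle cleanly is the tensor-product global dimension identity $\mathrm{gl.dim}(R\Sg_r \otimes_R R\Sg_t) = \mathrm{gl.dim~}R\Sg_r + \mathrm{gl.dim~}R\Sg_t$ over a field, which is standard (e.g. via the Künneth formula for $\Ext$, as already used implicitly in Corollary \ref{Ext between cell module for walled for split pair}) but should be cited rather than left implicit.
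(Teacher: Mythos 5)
Your proposal is correct and follows essentially the same route as the paper: the forward direction transfers finiteness of global dimension to $R\Sg_{r,t}$ via the exact split pair and [\cite{DK}, Corollary 1.6] and then reduces to the symmetric group factors, and the converse invokes quasi-heredity of $\B_{r,t}(\delta)$ (the paper cites [\cite{CVDM}, Corollary 2.8], the source you correctly guessed) together with [\cite{KX2}, Theorem 1.1]. Your explicit Künneth/tensor-product step pinning down $p>\max\{r,t\}$ (rather than only $p>\min\{r,t\}$, which is all the paper's preceding remark records) is a welcome precision, but the overall argument is the paper's.
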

\begin{proof}
If the $R$-algebra $\B_{r,t}(\delta)$ has finite global dimension, then $R\Sg_{r,t}$ has finite global dimension by [\cite{DK}, Corollary 1.6], and Theorem \ref{corner split quotient of walled Brauer algebras}. This gives $p > \max\{r,t\}$, or $p=0$. Conversely, if the given conditions hold, then $\B_{r,t}(\delta)$ is quasi-hereditary by [\cite{CVDM}, Corollary 2.8], which ensures $\B_{r,t}(\delta)$ has finite global dimension by [\cite{KX2}, Theorem 1.1].
\end{proof}
The statements in Theorem \ref{corner split quotient of walled Brauer algebras}, Corollary \ref{Hom between cell module for walled Brauer for split pair} and Corollary \ref{Ext between cell module for walled for split pair} holds true for the case $\delta=0$, and one of $r$ and $t$ greater than or equal to $2$. The Corollary \ref{global dimension for walled brauer for split pair} is still valid if $\delta=0$ with one of $r$ and $t$ greater than or equal to $2$, and $r \neq t$.

\subsection*{Acknowledgment}

The authors express their gratitude to Prof. Steffen Koenig for his time and valuable discussions. We would like to sincerely thank the referees for their thoughtful and constructive comments, which have significantly improved the quality of this paper. The first author's research is supported by Indian Institute of Science Education and Research Thiruvananthapuram PhD fellowship. The second author’s research was partially supported by IISER-Thiruvananthapuram, SERB-Power Grant SPG/2021/004200 and Prof. Steffen Koenig’s research grant. The second author also would like to acknowledge the Alexander Von Humboldt Foundation for their support.

\end{document}